\def\NZQ{\Bbb}               % the font for N,Z,Q,R,C
\def\NN{{\NZQ N}}
\def\ZZ{{\NZQ Z}}
\def\frk{\frak}               % font for "Fraktur"
\def\Phi{{\frk n}}
\def\Phi{{\frk N}}
\def\opn#1#2{\def#1{\operatorname{#2}}} % to make operators
\opn\chara{char} \opn\length{\ell} \opn\pd{pd} \opn\rk{rk}
\opn\projdim{proj\,dim} \opn\injdim{inj\,dim} \opn\rank{rank}
\opn\depth{depth} \opn\grade{grade} \opn\height{height}
\opn\embdim{emb\,dim} \opn\codim{codim}
\opn\Tr{Tr} \opn\bigrank{big\,rank}
\opn\superheight{superheight}\opn\lcm{lcm}
\opn\trdeg{tr\,deg}%\emph{
\opn\reg{reg} \opn\lreg{lreg} \opn\ini{in} \opn\lpd{lpd}
\opn\size{size}
\opn\div{div} \opn\Div{Div} \opn\cl{cl} \opn\Cl{Cl}
\opn\Spec{Spec} \opn\Supp{Supp} \opn\supp{supp} \opn\Sing{Sing}
\opn\Ass{Ass} \opn\Min{Min}
\opn\Ann{Ann} \opn\Rad{Rad} \opn\Soc{Soc}
\opn\Im{Im} \opn\Ker{Ker} \opn\Coker{Coker} \opn\Am{Am}
\opn\Hom{Hom} \opn\Tor{Tor} \opn\Ext{Ext} \opn\End{End}
\opn\Aut{Aut} \opn\id{id}
\opn\nat{nat}
\opn\pff{pf}%   \pf exists already
\opn\Pf{Pf} \opn\GL{GL} \opn\SL{SL} \opn\mod{mod} \opn\ord{ord}
\opn\Gin{Gin} \opn\Hilb{Hilb}\opn\sdepth{sdepth}
\opn\aff{aff} \opn\con{conv} \opn\relint{relint} \opn\st{st}
\opn\lk{lk} \opn\cn{cn} \opn\core{core} \opn\vol{vol}
\opn\link{link} \opn\star{star}
\opn\gr{gr}
\def\pot#1#2{#1[\kern-0.28ex[#2]\kern-0.28ex]}
\opn\dirlim{\underrightarrow{\lim}}
\opn\inivlim{\underleftarrow{\lim}}
\let\Dirsum=\bigoplus
\def\Implies{\ifmmode\Longrightarrow \else
        \unskip${}\Longrightarrow{}$\ignorespaces\fi}
\def\implies{\ifmmode\Rightarrow \else
        \unskip${}\Rightarrow{}$\ignorespaces\fi}
\def\iff{\ifmmode\Longleftrightarrow \else
        \unskip${}\Longleftrightarrow{}$\ignorespaces\fi}
\newtheorem{Theorem}{Theorem}[section]
\newtheorem{Lemma}[Theorem]{Lemma}
\newtheorem{Corollary}[Theorem]{Corollary}
\newtheorem{Proposition}[Theorem]{Proposition}
\newtheorem{Example}[Theorem]{Example}
\let\epsilon\varepsilon
\let\phi=\varphi
\let\kappa=\varkappa
\def\qed{\ifhmode\textqed\fi
      \ifmmode\ifinner\quad\qedsymbol\else\dispqed\fi\fi}
\def\textqed{\unskip\nobreak\penalty50
       \hskip2em\hbox{}\nobreak\hfil\qedsymbol
       \parfillskip=0pt \finalhyphendemerits=0}
\def\dispqed{\rlap{\qquad\qedsymbol}}
\opn\dis{dis}
\def\pnt{{\raise0.5mm\hbox{\large\bf.}}}
\opn\Lex{Lex}
\begin{document}

\title{Depth and Stanley depth of multigraded modules}

\author{Asia Rauf}
\thanks{The author wants to acknowledge Higher Education Commission Pakistan for partial financial support during preparation of this work.}
\subjclass{Primary 13H10, Secondary
13P10, 13C15, 13F20}
\keywords{Multigraded modules, depth, Stanley
decompositions, Stanley depth.}
\address{Asia Rauf,
Abdus Salam School of Mathematical Sciences, GC University, Lahore}
\email{asia.rauf@gmail.com}

\begin{abstract}
We study the behavior of depth and Stanley depth along short exact
sequences of multigraded modules and under reduction modulo an element.
\end{abstract}

\maketitle

\section*{Introduction}
Let $K$ be a field and $S=K[x_1,\ldots,x_n]$  a polynomial ring in
$n$ variables over $K$. Let $M$ be a finitely generated
multigraded (i.e. $\ZZ^n$-graded) $S$-module. Let $m\in M$ be a
homogeneous element in $M$ and $Z\subseteq \{x_1,\ldots,x_n\}$. We
denote by $mK[Z]$ the $K$-subspace of $M$ generated by all elements
$mv$, where $v$ is a monomial in $K[Z]$. The multigraded $K$-subspace
$mK[Z]\subset M$ is called Stanley space of dimension $|Z|$, if
$mK[Z]$ is a free $K[Z]$-module. A Stanley decomposition of $M$ is a
presentation of the  $K$-vector space $M$ as a finite direct sum of
Stanley spaces $\mathcal{D}:\,\,M=\Dirsum_{i=1}^rm_iK[Z_i]$. Set
$\sdepth \mathcal{D}=\min\{|Z_i|:i=1,\ldots,r\}$. The number
\[
\sdepth(M):=\max\{\sdepth({\mathcal D}):\; {\mathcal D}\; \text{is a
Stanley decomposition of}\;  M \}
\]
is called Stanley depth of $M$. In $1982$, Richard P. Stanley
\cite[Conjecture 5.1]{St1} conjectured that $\sdepth(M)\geq
\depth(M)$
  for all finitely generated $\ZZ^n$-graded $S$-modules $M$.
    The conjecture is discussed in some special cases in
    \cite{HSY}, \cite{I2}, \cite{HVZ}, \cite{As}, \cite{I1}, \cite{P}
    \cite{C1}, \cite{C2}.

If $M$ is a finitely generated module over a Noetherian local ring $(R,m)$ and
$x\in m$ then it is well-known that $\dim M/xM\geq \dim M-1$.
   Our Proposition \ref{prodepth} and Lemma \ref{old}, show that the above inequality is preserved for $\depth$
   and $\sdepth$ when $M=S/I$ and $I\subset S$ is a monomial ideal and $x=x_k$ for any $k\in[n]$.
   If $M$ is a general
    multigraded $S$-module, then we might have $\depth M/x_kM < \depth M-1$ as shows  Example
    \ref{d1}. Also we might have $\sdepth M/x_kM < \sdepth M-1$ even if $x_k$ is regular on $M$, as shows Example \ref{me}.

As we know depth decreases by one if we reduce modulo a regular
element. In \cite[Theorem 1.1]{As}, it is proved that the
corresponding statement holds for the Stanley depth in the case of
$M=S/I$ where $I\subset S$ is a monomial ideal and $f$  a monomial
in $S$ which is regular on $M$. The next question arises whether
this is true for any multigraded module? The answer is no, see
Example \ref{me}. Let
$$\mathcal{F}:\,\,0=M_0\subset M_1\subset \ldots \subset M_r=M$$ be a chain of $\ZZ^n$-graded submodules of $M$. Then $\mathcal{F}$ is called a prime filtration of $M$ if $M_i/M_{i-1}\cong (S/P_i)(-a_i)$ where $a_i\in\ZZ^n$ and $P_i$ is a monomial prime ideal for all $i$. We denote $\Supp \mathcal{F}=\{P_1,\ldots,P_r\}$. A finitely generated module $M$ is called almost clean if there exists a prime filtration $\mathcal{F}$ of $M$ such that $\Supp(\mathcal{F})=\Ass(M)$. We show in Lemma \ref{aclean} that for almost clean module $M$ and $x_k\in S$ being regular on $M$, we have $\sdepth M/x_kM \geq\sdepth M - 1.$
However, we show in Proposition \ref{red} that
$\sdepth M/x_kM\leq \sdepth M - 1$, if $x_k$ is regular on $M$. As
an application we get that Stanley's conjecture holds for $M$ if it
holds for the module $M/x_kM$ (see Corollary \ref{imp}). Moreover if
$M$ has a maximal regular sequence given by monomials then Stanley's conjecture holds for $M$ (see Corollary \ref{main}).

Given a short exact sequence of finitely generated multigraded
$S$-modules, then the Stanley depth of the middle one is greater than or equal to the minimum of Stanley depths of the ends (see Lemma
\ref{exact}). Several examples show that the "Depth lemma" is mainly
wrong in the frame of $\sdepth$ (see Examples \ref{me2} and
\ref{d2}). However, we prove in Lemma \ref{mcii} that if $I$ is any
monomial complete intersection of $S$, then $\sdepth I$ is greater than or equal to $\sdepth S/I+1$. But in general for any monomial ideal this inequality is still an open question.

In the last section, we prove that if $I\subset
S_1=K[x_1,\ldots,x_n]$, $J\subset
 S_2=K[y_1,\ldots,y_m]$ are monomial ideals and
 $S=K[x_1,\ldots,x_n,y_1,\ldots,y_m]$, then the Stanley depth of the tensor product of $S_1/I$
 and $S_2/J$ (over $K$) is greater than or equal to the sum of $\sdepth S_1/I$ and $\sdepth S_2/J$. This inequality could be strict as shows Example \ref{atpexp}.

I am grateful to Professor J\"urgen Herzog for useful discussions and comments during the preparation of the paper.
%I also want to thank referee for Corollary \ref{ref1}, Example \ref{ref2} and Example \ref{ref3}.

\section{The behavior of depth and sdepth under reduction modulo element}

In dimension theory it is well known the following result (see e.g.
\cite[Proposition A.4]{BH}, or \cite[Corollary 10.9]{Eis})

\begin{Theorem}
If $(R,m)$ is a Noetherian local ring and $M$ is finitely generated R-module, then for any $x\in m$ we have
\[
\dim M/xM\geq \dim M-1.
\]
\end{Theorem}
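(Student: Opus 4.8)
The plan is to reduce the statement to the standard characterization of Krull dimension via systems of parameters. Recall that for a finitely generated module $N$ over a Noetherian local ring $(R,m)$, the dimension $\dim N$ equals the least integer $s$ for which there exist $y_1,\dots,y_s\in m$ with $N/(y_1,\dots,y_s)N$ of finite length; this is a consequence of Krull's generalized principal ideal theorem applied to $R/\Ann N$. I intend to use both directions of this equivalence.

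First I would set $d=\dim M/xM$ and apply the existence half of the characterization to the module $M/xM$: there are elements $y_1,\dots,y_d\in m$ forming a system of parameters for $M/xM$, so that $(M/xM)\big/(y_1,\dots,y_d)(M/xM)$ has finite length. The next step is the only identification needed, and it is purely formal: quotienting $M$ by $x$ and then by $y_1,\dots,y_d$ coincides with quotienting by the ideal $(x,y_1,\dots,y_d)$, giving a canonical isomorphism $(M/xM)\big/(y_1,\dots,y_d)(M/xM)\cong M/(x,y_1,\dots,y_d)M$. Hence $M/(x,y_1,\dots,y_d)M$ has finite length.

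Finally I would feed this back through the bounding half of the characterization. Since the $d+1$ elements $x,y_1,\dots,y_d$ all lie in $m$ and cut $M$ down to finite length, the minimality statement forces $\dim M\le d+1$. Rearranging gives $\dim M/xM=d\ge \dim M-1$, as claimed.

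The reduction argument is entirely formal; the substance is concentrated in the dimension characterization itself. The genuinely hard input is the inequality $\dim N\le s$ whenever $N/(y_1,\dots,y_s)N$ has finite length, which is exactly Krull's height theorem for $R/\Ann N$ and is the only place where the Noetherian hypothesis does real work. I would treat that as a known black box rather than reprove it, so that the whole argument is short once the characterization is in hand.
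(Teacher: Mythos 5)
Your argument is correct: it is the standard system-of-parameters proof, reducing the claim to the characterization of $\dim$ as the least number of elements of $m$ cutting the module down to finite length. The paper itself offers no proof of this theorem, only citations to \cite[Proposition A.4]{BH} and \cite[Corollary 10.9]{Eis}, and your argument is essentially the one given in those references.
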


If we consider reduction by a regular element, then the $\depth$
decreases by one. But what happens if we take reduction
 by a non-regular element?

\begin{Proposition}
\label{prodepth}Let $S=K[x_1,\ldots,x_{n}]$ be a polynomial ring
over a field $K$, $I\subset S$  a monomial ideal and  $R=S/I$. Then
\begin{eqnarray}
\label{depth1}\depth(R/x_nR)\geq \depth(R)-1.
\end{eqnarray}
\end{Proposition}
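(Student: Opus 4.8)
The plan is to compare $R/x_nR$ with $R=S/I$ through multiplication by $x_n$ and then to invoke the Depth Lemma. First I would record the identification $R/x_nR=S/(I+(x_n))=S/(I,x_n)$ and set $J=(I:x_n)$, which is again a monomial ideal with $J\supseteq I$. Multiplication by $x_n$ then yields the short exact sequence of finitely generated $\ZZ^n$-graded $S$-modules
\[
0\To S/(I:x_n)\xrightarrow{\ \cdot x_n\ } S/I\To S/(I,x_n)\To 0,
\]
where the first map is injective precisely because $J=(I:x_n)$ is the colon ideal, and its cokernel is $S/(I,x_n)=R/x_nR$.

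Next I would apply the standard depth inequalities along a short exact sequence. For $0\to A\to B\to C\to 0$ one has $\depth C\ge\min\{\depth A-1,\ \depth B\}$; taking $A=S/(I:x_n)$, $B=S/I$ and $C=R/x_nR$ gives
\[
\depth(R/x_nR)\ge\min\{\depth S/(I:x_n)-1,\ \depth(R)\}.
\]
Hence the whole statement reduces to the single inequality $\depth S/(I:x_n)\ge\depth(R)=\depth S/I$: granting this, the minimum above is at least $\min\{\depth S/I-1,\ \depth S/I\}=\depth S/I-1$, which is exactly \eqref{depth1}.

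The remaining inequality $\depth S/(I:x_n)\ge\depth S/I$ is where I expect the real work to lie. When $x_n$ is a nonzerodivisor on $S/I$ it is immediate, since then $(I:x_n)=I$ (and in fact $\depth$ drops by exactly one). The genuine case is when $x_n$ is a zerodivisor, so that $(I:x_n)\supseteq I$ is a strict inclusion. Here I would use that coloning a monomial ideal can only discard associated primes: writing an irredundant monomial primary decomposition $I=\bigcap_i Q_i$, passing to $(I:x_n)$ deletes exactly the components with $x_n\in Q_i$ and leaves the others $\sqrt{Q_i}$-primary, whence $\Ass\bigl(S/(I:x_n)\bigr)\subseteq\Ass(S/I)$. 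The delicate point is that when $\depth S/I>0$ there need not be any \emph{variable} that is regular on $S/I$ (already $I=(x_1x_2)$ shows this), so one cannot naively drop a variable by a monomial regular element. I would circumvent this either by polarizing $I$ to reduce to the squarefree case and reading off $\depth$ combinatorially, or by expressing $\depth$ through local cohomology $\Coh{i}{\,\cdot\,}$ and comparing the two modules $S/(I:x_n)\cong x_n(S/I)$ (up to a shift) and $S/I$ directly via the long exact sequence of the displayed short exact sequence, tracking vanishing below $\depth S/I$. Establishing this colon inequality is the main obstacle; once it is in hand, the Proposition follows at once from the two displayed reductions.
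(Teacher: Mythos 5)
Your reduction is algebraically sound as far as it goes: the short exact sequence $0\to S/(I:x_n)\xrightarrow{\,\cdot x_n\,} S/I\to S/(I,x_n)\to 0$ is correct, and so is the inequality $\depth C\geq\min\{\depth A-1,\depth B\}$ coming from the long exact sequence of local cohomology. But the step you defer, $\depth S/(I:x_n)\geq\depth S/I$, is not a smaller auxiliary fact --- it is essentially equivalent to the Proposition. In the paper this colon inequality is Corollary \ref{ref1}, and it is \emph{deduced from} Proposition \ref{prodepth} using exactly the short exact sequence you wrote down; running the argument in your direction is therefore circular unless you supply an independent proof, and none of your three sketches does. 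The containment $\Ass(S/(I:x_n))\subseteq\Ass(S/I)$ (true, since $S/(I:x_n)\cong x_n(S/I)$ is a submodule of $S/I$) only tells you that positivity of depth is inherited; depth is not monotone under containment of associated primes (already $N=(x,y)\subset S=K[x,y]$ has $\Ass N=\Ass S$ but $\depth N=1<2$), so this cannot yield the inequality. The local cohomology route is explicitly circular: the long exact sequence of the \emph{same} short exact sequence ties $\Coh{i}{S/(I:x_n)}$ to $\Coh{i}{S/(I,x_n)}$, so bounding the former requires the vanishing of the latter below $\depth S/I-1$, which is precisely the Proposition. The polarization idea is left entirely unexamined.

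The paper's proof proceeds by a different mechanism that bypasses the colon ideal altogether: setting $\bar S=K[x_1,\ldots,x_{n-1}]$ and $\bar I=I\cap\bar S$, one checks $\bar S/\bar I\cong S/(I,x_n)$ (here it is essential that $I$ is monomial: a monomial of $\bar S$ lying in $(I,x_n)$ must lie in $I$), so the composite $\bar R\to R\to\bar R$ of the natural maps is the identity and $\bar R=R/x_nR$ is a direct summand of $R$ as an $S$-module. Consequently each Koszul homology module $H_i(x_1,\ldots,x_{n-1};\bar R)$ is a direct summand of $H_i(x_1,\ldots,x_{n-1};R)$, and comparing the top nonvanishing Koszul homology via \cite[Theorem 1.6.17, Lemma 1.6.18]{BH} gives $\depth R\leq\depth\bar R+1$ directly. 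This dependence on monomiality is consistent with Example \ref{ref2}, which shows the colon inequality fails for non-monomial elements. To salvage your approach you would need a self-contained proof of $\depth S/(I:x_n)\geq\depth S/I$, which you have not provided.
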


\begin{proof}
Let $\bar{R}=R/x_nR\simeq S/(I,x_n)$. We denote
$\bar{S}=K[x_1,\ldots,x_{n-1}]$ and let $x'=\{x_1,\ldots,x_{n-1}\}$,
$x=\{x_1,\ldots,x_n\}$.

Let $\varphi$ be the canonical map from $R$ to $\bar{R}$ and
$\alpha$ be the composite map
$$\bar{S}\longrightarrow S \longrightarrow R=S/I,$$ where the first map is
the canonical embedding and
the second map is the canonical surjection. It
is clear that $\ker(\alpha)=I\cap \bar{S}. $  Let $\alpha_1$ be the composite map
$$\bar{S}\longrightarrow S \longrightarrow R=S/I \longrightarrow S/(I,x_n).$$ It is clear that
$\alpha_1$ is surjective. We claim that $\ker(\alpha_1)=I\cap \bar{S}$. One inclusion is obvious.
 To prove other inclusion, we consider a monomial $v\in \ker(\alpha_1)$, that is, $v\in(I,x_n)$.
 Since $v\in\bar{S}$ and $I$ is a monomial ideal, it follows that $v\in I$.  Let $\ker(\alpha_1)=\bar{I}$ then
$\bar{S}/\bar{I}\simeq S/(I,x_n)$. It follows that the composition
$\bar{R}\rightarrow R\rightarrow \bar{R}$ of the natural maps is the
identity. Therefore, the $S$-module $\bar{R}$ is a direct summand of
the $S$-module $R$. This implies that the $S$-module
$H_i(x';\bar{R})$ is a direct summand of $H_i(x';R)$ for all $i$,
where $H_i(x';\bar{R})$ and $H_i(x';R)$ are the i-th Koszul homology
modules of $x'$ with respect to $\bar{R}$ and $R$ respectively. In
particular, if $H_i(x';\bar{R})\neq 0$, then $H_i(x';R)\neq0$. Let
$k=\max\{i\mid H_i(x':\bar{R})\neq0\}$. Then $\depth\bar{R}=n-1-k$,
by \cite[Theorem 1.6.17]{BH}. Since $H_k(x';\bar{R})\neq0$, it
follows that $H_k(x';R)\neq0$ which implies that $H_k(x;R)\neq0$ by
\cite[Lemma 1.6.18]{BH}. Therefore applying again
\cite[Theorem 1.6.17]{BH} it follows that $\depth R\leq
n-k=\depth\bar{R}+1$.
\end{proof}

\begin{Corollary}
\label{ref1}Let $I\subset S$ be a monomial ideal. Then $\depth S/(I:u)\geq \depth S/I$ for all monomials $u\not\in I$.
\end{Corollary}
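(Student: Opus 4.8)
The plan is to reduce to the case of a single variable and then iterate, using Proposition \ref{prodepth} together with the standard depth lemma for short exact sequences. First I would treat the case $u=x_k$. For an arbitrary monomial ideal $J\subset S$ and any variable $x_k$, multiplication by $x_k$ on $S/J$ produces the exact sequence
\[
0\to S/(J:x_k)\xrightarrow{\;x_k\;} S/J\to S/(J,x_k)\to 0,
\]
where the first module carries an appropriate shift of the multigrading; this is because the kernel of multiplication by $x_k$ on $S/J$ is $(J:x_k)/J$ (so that the image is isomorphic to $S/(J:x_k)$) and the cokernel is $S/(J,x_k)$.

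Applying the depth lemma to this sequence, in the form $\depth A\geq\min\{\depth B,\depth C+1\}$ for $0\to A\to B\to C\to 0$, and noting that the grading shift does not affect depth, I obtain
\[
\depth S/(J:x_k)\geq\min\{\depth S/J,\ \depth S/(J,x_k)+1\}.
\]
Now Proposition \ref{prodepth}, applied with $x_k$ in place of $x_n$ (which is legitimate after relabeling the variables), gives $\depth S/(J,x_k)=\depth (S/J)/x_k(S/J)\geq \depth S/J-1$, hence $\depth S/(J,x_k)+1\geq\depth S/J$. Therefore the minimum above equals $\depth S/J$, and I conclude $\depth S/(J:x_k)\geq\depth S/J$.

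To finish, I would write $u=x_{i_1}\cdots x_{i_d}$ as a product of variables and set $I_0=I$ and $I_j=(I_{j-1}:x_{i_j})$, so that $I_d=(I:u)$. Since $u\notin I$ and $I$ is a monomial ideal, every divisor of $u$—in particular each partial product $x_{i_1}\cdots x_{i_j}$—lies outside $I$, so each $I_j$ is a proper ideal and every $\depth S/I_j$ is finite. Applying the single-variable inequality with $J=I_{j-1}$ at each step gives $\depth S/I_j\geq\depth S/I_{j-1}$, and chaining these yields $\depth S/(I:u)=\depth S/I_d\geq\depth S/I_0=\depth S/I$, as required.

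The iteration is routine; the only genuine content lies in the single-variable step, where the depth-lemma bound on $\depth A$ and the inequality $\depth C\geq\depth B-1$ coming from Proposition \ref{prodepth} fit together precisely so that the middle term wins the minimum. I do not expect a real obstacle beyond recording the degenerate cases (e.g.\ $u\in I$, where $S/(I:u)=0$ and the statement is vacuous).
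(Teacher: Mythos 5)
Your proof is correct and follows essentially the same route as the paper: reduce to $u=x_k$ via $(I:uv)=((I:u):v)$, use the multiplication-by-$x_k$ exact sequence $0\to S/(I:x_k)\to S/I\to S/(I,x_k)\to 0$, and combine the Depth Lemma with Proposition \ref{prodepth}. Your write-up is in fact somewhat more detailed than the paper's (which leaves the single-variable step and the properness of the intermediate ideals implicit), but the argument is the same.
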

\begin{proof}
Since $(I:uv)=((I:u):v)$, where $I$ is a monomial ideal and $u$ and $v$ are monomials, we may reduce to the case $u=x_n$, and apply recurrence. Then we have the exact sequence
\[
0\longrightarrow S/(I:x_n)\longrightarrow S/I\longrightarrow S/(I,x_n)\longrightarrow 0.
\]
By Depth Lemma \cite[Lemma 1.3.9]{V} and Proposition \ref{prodepth}, we obtain the required result.
\end{proof}

This Corollary does not hold (and so Proposition \ref{prodepth}) if $u$ is not a monomial, as we have the following example:

\begin{Example}
\label{ref2}{\em Let $S=K[x,y,z,t]$ and $I=(x,y)\cap (y,z)\cap (z,t)$ and $u=y+z$. Then $J:=(I:u)=(x,y)\cap (z,t)$ and $\depth S/J=1<2=\depth S/I$.}
\end{Example}

The Proposition \ref{prodepth} is not true in general. If $M$ is a finitely
generated graded $R$-module and $x\in R_1$ then we might have
\[
\depth(M/xM)< \depth(M)-1,
\]
as shows the following example:
\begin{Example}{\em
\label{d1}Let $S=K[x,y,z,t]$, $M=(x,y,z)/(xt)$. We have $\depth M=2$
and $M/xM=(x,y,z)/(x^2,xy,xz,xt)$. Since the maximal ideal is an
associated prime ideal of $M/xM$, we get $\depth M/xM=0$. Hence
$\depth(M/xM)< \depth(M)-1$.}
\end{Example}

In Proposition \ref{prodepth} we might have
\[
\depth(R/x_nR)>\depth(R)-1,
\]
as shows the following example:

\begin{Example}{\em
 Let $I=(x_1^2,x_1x_2,\ldots,x_1x_n)\subset S=K[x_1,\ldots,x_{n}]$ be a monomial ideal of $S$ and
 $R=S/I$.
  Then  $\depth(R)=0$ since the maximal ideal $(x_1,x_2,\ldots,x_n)\in \Ass(R)$.  Since
  $R/x_1R=S/(x_1)\simeq K[x_2,\ldots,x_n]$, we get $\depth(R/x_1R)=n-1$. Hence
  $\depth(R/x_1R)>  \depth(R)-1$.}
\end{Example}

For the $\sdepth$ we have a statement similar to that of Proposition \ref{prodepth}. Indeed in the proof of \cite[Lemma 1.2]{As} where it was shown that $\sdepth S/(I,x_n)=\sdepth S/I - 1$ if $x_n$ is regular on $S/I$, we actually showed the following (without any assumption on $x_n$):
\begin{Lemma}
\label{old}Let $S=K[x_1,\ldots,x_{n}]$ be a polynomial ring over the field $K$.
Let $I\subset S$ be any monomial ideal. Then
\[
\sdepth(S/(I,x_n))\geq \sdepth(S/I)-1.
\]
\end{Lemma}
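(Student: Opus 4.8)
The plan is to build a Stanley decomposition of $S/(I,x_n)$ directly out of one for $S/I$, losing at most one in the Stanley depth. First I would fix a Stanley decomposition $\mathcal{D}\colon S/I = \Dirsum_{i=1}^r u_iK[Z_i]$ that realizes the Stanley depth, so that $\sdepth\mathcal{D}=\sdepth(S/I)$ and $\min_i|Z_i|=\sdepth(S/I)$. Because $I$ is a monomial ideal and the decomposition is $\ZZ^n$-graded, each graded component of $S/I$ is at most one-dimensional, so each generator $u_i$ may be taken to be (the residue of) a monomial not lying in $I$.

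Next I would use the fact that, as a $K$-vector space, $S/(I,x_n)$ is spanned exactly by the monomials that are neither in $I$ nor divisible by $x_n$, and sort the Stanley spaces of $\mathcal{D}$ by how $x_n$ meets them. If $x_n\mid u_i$, then every monomial of $u_iK[Z_i]$ is divisible by $x_n$, so this summand contributes nothing and is discarded. If $x_n\nmid u_i$ and $x_n\notin Z_i$, then no monomial of $u_iK[Z_i]$ is divisible by $x_n$, so the whole summand survives unchanged. If $x_n\nmid u_i$ but $x_n\in Z_i$, then the monomials of $u_iK[Z_i]$ not divisible by $x_n$ are precisely those in $u_iK[Z_i\setminus\{x_n\}]$. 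This produces the candidate
\[
S/(I,x_n)=\Dirsum_{x_n\nmid u_i,\; x_n\notin Z_i}u_iK[Z_i]\;\oplus\;\Dirsum_{x_n\nmid u_i,\; x_n\in Z_i}u_iK[Z_i\setminus\{x_n\}].
\]

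I would then verify that this candidate is an honest Stanley decomposition. Each retained summand is free over its polynomial subring, since deleting $x_n$ from $Z_i$ leaves $u_i$ as a free generator over $K[Z_i\setminus\{x_n\}]$; the summands stay pairwise disjoint because they were disjoint inside $\mathcal{D}$; and together they span exactly the monomials of $S/(I,x_n)$. For the covering claim, any monomial $w\notin I$ with $x_n\nmid w$ lies in a unique $u_iK[Z_i]$, and writing $w=u_im$ forces $x_n\nmid u_i$ and $x_n\nmid m$, so $w$ lands in a retained piece. Each retained summand involves either $|Z_i|$ or $|Z_i|-1$ variables, both at least $\min_i|Z_i|-1=\sdepth(S/I)-1$, which yields $\sdepth(S/(I,x_n))\geq\sdepth(S/I)-1$.

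The whole argument is essentially bookkeeping, so the step deserving the most care is the verification that the candidate is a genuine decomposition — in particular the covering claim, which guarantees that discarding the $x_n\mid u_i$ summands and truncating the $x_n\in Z_i$ summands neither loses a surviving monomial nor creates an overlap. I expect no genuine obstacle here; the essential observation is that nowhere does the construction use regularity of $x_n$ on $S/I$, so the inequality holds unconditionally, and regularity is exactly what would upgrade it to the equality of \cite[Lemma 1.2]{As}.
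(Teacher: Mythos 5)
Your proof is correct and is essentially the argument the paper relies on: the paper gives no self-contained proof of this lemma, deferring instead to the proof of \cite[Lemma 1.2]{As}, and the construction there is exactly your restriction of an optimal Stanley decomposition of $S/I$ to the monomials of $x_n$-degree zero --- discard the summands with $x_n\mid u_i$, keep the rest after deleting $x_n$ from $Z_i$ when it occurs there. Your closing observation that regularity of $x_n$ is nowhere used is precisely the point the paper is making by stating the inequality without that hypothesis; your writeup simply supplies in full the bookkeeping the paper leaves to the citation.
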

This lemma can not be extended to general multigraded modules $M$ as shows the following example, where the variable is even regular on $M$.
\begin{Example}
\label{me}{\em Let $M=(x,y,z)$ be an ideal of $S=K[x,y,z]$. Consider
a Stanley decomposition $M=zK[x,z]\oplus xK[x,y]\oplus yK[y,z]\oplus
xyzK[x,y,z]$. Since $\sdepth M\leq \dim S=3$
 and  $M$ is not a principle ideal, it follows $\sdepth M=2$.
Note that $x$ induces a non-zero element  in the socle of $M/xM$
which cannot be contained in any Stanley space of dimension greater
or equal with one. Hence
   $\sdepth M/xM=0$. Thus $\sdepth M/xM< \sdepth M - 1$.}
\end{Example}

However for the special case when $M$ is {\em almost clean} (see
\cite{HVZ}), that is there exists a prime filtration $\mathcal{F}$
of $M$ such that
  $\Supp(\mathcal{F})=\Ass(M)$, we have the following:

  \begin{Lemma} \label{aclean} Let $M$ be a finitely generated multigraded $S$-module. If $M$
is almost clean and $x_k\in S$ is regular on $M$, then
\[
\sdepth M/x_kM \geq\sdepth M - 1.
\]
\end{Lemma}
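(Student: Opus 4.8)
The plan is to exploit the almost clean prime filtration directly, reduce it modulo $x_k$, and then compare the resulting Stanley depth with an upper bound for $\sdepth M$ coming from the associated primes. Fix an almost clean filtration $\mathcal{F}:\ 0=M_0\subset M_1\subset\cdots\subset M_r=M$ with $M_i/M_{i-1}\cong (S/P_i)(-a_i)$ and $\Supp(\mathcal{F})=\{P_1,\dots,P_r\}=\Ass(M)$. The first observation is that since $x_k$ is regular on $M$, it lies in no associated prime of $M$, hence $x_k\notin P_i$ for every $i$. Writing each monomial prime as $P_i=(x_j:\ j\in A_i)$, this means $k\notin A_i$, so $x_k$ is a nonzerodivisor on the domain $S/P_i$ for all $i$.

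First I would show that $\mathcal{F}$ reduces to a prime filtration of $M/x_kM$. Because $x_k$ is regular on $M$ it is regular on every submodule $M_i$, and by the previous paragraph it is regular on every quotient $M_i/M_{i-1}$; consequently $\Tor_1^S(S/(x_k),M_i/M_{i-1})=0$, and tensoring the short exact sequences $0\to M_{i-1}\to M_i\to M_i/M_{i-1}\to 0$ with $S/(x_k)$ keeps them exact. This yields injections $M_{i-1}/x_kM_{i-1}\hookrightarrow M_i/x_kM_i$ whose successive quotients are $(S/(P_i,x_k))(-a_i)$, so that $0\subset M_1/x_kM_1\subset\cdots\subset M_r/x_kM_r=M/x_kM$ is a prime filtration with support $\{(P_i,x_k):\ i=1,\dots,r\}$. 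Each $(P_i,x_k)$ is again a monomial prime with $\dim S/(P_i,x_k)=\dim S/P_i-1$, so the Stanley decomposition attached to this filtration gives $\sdepth(M/x_kM)\ge \min_i\dim S/(P_i,x_k)=\min_i\dim S/P_i-1$.

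It then remains to bound $\sdepth M$ from above by $\min_i\dim S/P_i$. Here I would use that $\sdepth M\le \dim S/P$ for every $P\in\Ass(M)$, the Stanley-depth analogue of the classical inequality $\depth M\le\dim S/P$. Granting this, since $M$ is almost clean we have $\{P_1,\dots,P_r\}=\Ass(M)$, whence $\sdepth M\le\min_i\dim S/P_i$; combining with the previous paragraph gives $\sdepth(M/x_kM)\ge\min_i\dim S/P_i-1\ge\sdepth M-1$, as desired.

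The main obstacle is exactly this upper bound $\sdepth M\le\dim S/P$ for $P\in\Ass(M)$. The difficulty is that a Stanley decomposition splits $M$ only as a $K$-vector space, so the module structure of the embedded copy $Sm\cong S/P$ (for a homogeneous $m$ with $\Ann m=P$) is not visible term by term. I would prove it by a socle-type multidegree argument: writing $P=(x_j:\ j\in A)$, the relations $x_jm=0$ for $j\in A$ should force a Stanley space carrying a component of $m$ to avoid the variables $x_j$, $j\in A$, and hence to have dimension at most $\dim S/P=|[n]\setminus A|$; making the cancellation bookkeeping precise, so that no contributions from other Stanley spaces can absorb these relations, is the technical heart. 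Everything else, namely the regularity bookkeeping in the reduction and the fact that $(P_i,x_k)$ remains a monomial prime of dimension one smaller, is routine.
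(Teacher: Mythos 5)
Your argument is correct and follows essentially the same route as the paper: reduce the almost clean filtration modulo $x_k$ (using that $x_k\notin P_i$ for all $i$) to obtain a prime filtration of $M/x_kM$ with factors $S/(P_i,x_k)(-a_i)$, apply Corollary \ref{easy}, and compare with $\min_i\dim S/P_i$. The upper bound $\sdepth M\le\dim S/P$ for all $P\in\Ass M$, which you single out as the technical heart, is a known result (see \cite{HVZ}) that the paper uses implicitly when writing $\min_i\dim S/(P_i,x_k)=\sdepth M-1$, so you may simply cite it rather than reprove it.
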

\begin{proof}
Suppose that  $\mathcal{F}$ is given by
$$0=M_0\subset M_1\subset \ldots \subset M_r=M$$
with $M_i/M_{i-1}\cong S/P_i(-a_i)$ for some $a_i\in \NN^n$ and some
monomial prime ideals $P_i$. Since $\Ass M=\{P_1,\ldots,P_r\}$ and
$x_k$ is regular on $M$, we get $x_k\not \in P_i$ and so $x_k$ is
regular on $M_i/M_{i-1}$. Set ${\bar M}_i=M_i/x_kM_i$. Then ${\bar
M}_i\subset {\bar M}_{i+1}$ and $\{{\bar M}_i\}$ define a filtration
${\bar {\mathcal{F}}}$ of ${\bar M}={\bar M}_r$ with ${\bar
M}_i/{\bar M}_{i-1}\cong S/(P_i,x_k)(-a_i)$. Thus $\sdepth {\bar
M}\geq \min_i \dim S/(P_i,x_k)=\sdepth M-1$ (see Corollary
\ref{easy}).
\end{proof}

The above Example \ref{me} hints that if $x$ is a regular element on $M$,
then $\sdepth M/xM\leq \sdepth M - 1$. This is the subject of our
next proposition.
\begin{Proposition}
\label{red}Let $M$ be finitely generated $\ZZ^n$-graded $S$-module and let $x_k$ be regular on $M$.
If $\mathcal{D}_1:\;M/x_kM=\Dirsum_{i=1}^r\bar{m_i}K[Z_i]$, is a Stanley decomposition of $M/x_kM$,
where $m_i\in M$ is homogeneous and $\bar{m_i}=m_i+x_kM$. Then
\begin{eqnarray}
M=\Dirsum_{i=1}^rm_iK[Z_i,x_k]
\end{eqnarray}
is a Stanley decomposition of $M$. In particular
\[
 \sdepth M/x_kM\leq \sdepth M - 1.
\]
\end{Proposition}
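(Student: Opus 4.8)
The plan is to verify directly that $\Dirsum_{i=1}^r m_iK[Z_i,x_k]$ satisfies the two defining conditions of a Stanley decomposition: that it spans $M$ as a $K$-vector space, and that the natural map $\Phi\colon\Dirsum_{i=1}^r K[Z_i,x_k]\To M$, $(f_i)_i\mapsto\sum_i m_if_i$, is injective. Injectivity is exactly what simultaneously certifies that the sum is direct and that each $m_iK[Z_i,x_k]$ is a free $K[Z_i,x_k]$-module, i.e. a Stanley space. Before anything else I would record the observation that $x_k\notin Z_i$ for every $i$: since $x_k\bar{m_i}=\overline{x_km_i}=0$ in $M/x_kM$, if $x_k$ occurred in $Z_i$ this would contradict the freeness of $\bar{m_i}K[Z_i]$ over $K[Z_i]$. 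Hence $K[Z_i,x_k]$ is a genuine one-variable extension with $|Z_i\cup\{x_k\}|=|Z_i|+1$, and reduction modulo $x_k$ carries $K[Z_i,x_k]$ onto $K[Z_i]$.

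For the spanning property, take a homogeneous $u\in M$ of multidegree $a$ and write $e_k\in\ZZ^n$ for the $k$-th unit vector. Decomposing the image of $u$ in $M/x_kM$ via $\mathcal{D}_1$ gives $u=\sum_i m_ig_i^{(0)}+x_ku'$ with $g_i^{(0)}\in K[Z_i]$ homogeneous and $u'\in M$ homogeneous of multidegree $a-e_k$. Iterating on $u'$ yields, after $N$ steps, $u=\sum_i m_i\bigl(\sum_{j<N}x_k^jg_i^{(j)}\bigr)+x_k^Nu^{(N)}$ with $u^{(N)}$ homogeneous of multidegree $a-Ne_k$. Since $M$ is finitely generated and $\ZZ^n$-graded, its homogeneous components vanish once the $k$-th coordinate of the degree drops below the minimum of the $k$-th coordinates of the generator degrees; thus $u^{(N)}=0$ for $N\gg0$ and $u=\sum_i m_if_i$ with $f_i=\sum_j x_k^jg_i^{(j)}\in K[Z_i,x_k]$. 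This proves $M=\sum_i m_iK[Z_i,x_k]$.

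For injectivity, I would note that $\Phi$ is multigraded, so it suffices to kill a homogeneous relation $\sum_i m_if_i=0$ of multidegree $a$, each $f_i$ being homogeneous of multidegree $a-\deg m_i$; and I would induct on the $k$-th coordinate $a_k$. Applying $M\To M/x_kM$ annihilates every term divisible by $x_k$ and leaves $\sum_i\bar{m_i}g_i^{(0)}=0$, where $g_i^{(0)}$ is the $x_k$-constant part of $f_i$; the directness and the $K[Z_i]$-freeness built into $\mathcal{D}_1$ then force $g_i^{(0)}=0$ for all $i$, so each $f_i=x_kh_i$ with $h_i\in K[Z_i,x_k]$. Consequently $x_k\sum_i m_ih_i=0$, and since $x_k$ is regular on $M$ we get $\sum_i m_ih_i=0$, a homogeneous relation of multidegree $a-e_k$ to which the inductive hypothesis applies, giving $h_i=0$ and hence $f_i=0$. (The induction is grounded because for $a_k$ below the minimal $k$-th generator degree no $f_i$ can be nonzero.) Injectivity of $\Phi$ establishes that $M=\Dirsum_i m_iK[Z_i,x_k]$ is a Stanley decomposition, and since its Stanley depth is $\min_i(|Z_i|+1)=\sdepth\mathcal{D}_1+1$, choosing $\mathcal{D}_1$ to realize $\sdepth(M/x_kM)$ yields $\sdepth M\ge\sdepth(M/x_kM)+1$, which is the asserted inequality.

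The main obstacle I anticipate is ensuring that the two recursive arguments genuinely terminate and bottom out. Both the lifting in the spanning step and the descent in the injectivity step rest on the fact that a finitely generated $\ZZ^n$-graded module has no homogeneous elements of arbitrarily negative $k$-th degree, and one must track the $x_k$-exponents carefully to see that they strictly decrease under each reduction while staying bounded below. The other point requiring care is invoking the two Stanley-space conditions of $\mathcal{D}_1$ at exactly the right moment, namely using both directness and $K[Z_i]$-freeness to pass from $\sum_i\bar{m_i}g_i^{(0)}=0$ to $g_i^{(0)}=0$.
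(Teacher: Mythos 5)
Your proposal is correct and follows essentially the same route as the paper's proof: the same lifted decomposition $\Dirsum_i m_iK[Z_i,x_k]$, spanning via iterated reduction modulo $x_k$ together with the lower bound on $x_k$-degrees of homogeneous elements of $M$, and directness/freeness via reduction modulo $x_k$, the Stanley-space properties of $\mathcal{D}_1$, and regularity of $x_k$. Your packaging of directness and freeness into a single injectivity statement for $\Phi$ (with induction on the $k$-th degree), and the explicit remark that $x_k\notin Z_i$, are minor presentational improvements over the paper's argument rather than a different method.
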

\begin{proof}

Let $N=\sum_{i=1}^rm_iK[Z_i,x_k]$. Then $N\subseteq M$. Since $\mathcal{D}_1$ is a Stanley decomposition of $M/x_kM$ it follows that $\psi(N)=M/x_kM$ where $\psi:\,M\rightarrow M/x_kM$ is the canonical epimorphism. This implies that $M=x_kM+N$ as $\ZZ^n$-graded $K$ vector spaces. We show that $M=N$. First we observe that $M=x_k^dM + N$ for all $d$. This follows by induction on $d$, because if we have $M=x_k^{d-1}M+N$, then
$M=x_k^{d-1}(x_kM+N)+N=x_k^{d}M+x_k^{d-1}N+N=x_k^{d}M+N$ since
$x_k^{d-1}N\subset N$. This completes the induction. Since $M$ is finitely generated there exists an integer $c$ such that $\deg_{x_k}(m)\geq c$ for all homogeneous elements $m\in M$.  Now let $m\in M$ be a homogeneous element with $\deg_{x_k}(m)=a$ and let $d>a-c$ be an integer. Since $M=x_k ^dM+N$, there exist homogeneous elements $v\in M$ and $w\in N$ such that $m=x_k^dv+w$, where $a=\deg_{x_k} v+d=\deg_{x_k}w$. It follows that $\deg_{x_k} v=a-d<c$, a contradiction. It implies that $v=0$, hence $m=w\in N$.

Now we show that the sum $\sum_{i=1}^rm_iK[Z_i,x_k]$ is direct, that
is $$m_iK[Z_i,x_k]\cap\sum_{j=1 \atop j\neq i}^rm_jK[Z_j,x_k]=(0).$$
Let $u=m_iq_i=\sum_{j=1 \atop j\neq i} ^rm_jq_j\in M$ be homogeneous for some $q_j$
monomials in $K[Z_j,x_k]$ such that $\deg(u)=\deg(m_jq_j)$ for all $j$. Let $p$ be the biggest power of $x_k$
dividing $q_i$. If $p=0$, then we have $\bar{u}=\bar{m_i}q_i\neq 0$ in $M/x_kM$ since $\bar{m_i}K[Z_i]$ is a Stanley space. It follows that $\bar{u}\in \bar{m_i}K[Z_i]\cap\sum_{j=1 \atop j\neq i} ^r\bar{m_j}K[Z_j]$, a contradiction. In the case of $p>0$, then in $M/x_kM$ we get
$\bar{u}=0=\sum_{j=1 \atop j\neq i} ^r\bar{m_j}\bar{q_j}$. It
follows that $\bar{q_j}=0$, since $\mathcal{D}_1$ is a Stanley decomposition of $M/x_kM$. Thus
$q_j=x_kq_j'$ for some $q_j'\in K[Z_j,x_k]$ and we get
$x_k(m_iq_i'-\sum_{j=1 \atop j\neq i} ^rm_jq_j')=0$, which implies
$m_iq_i'-\sum_{j=1 \atop j\neq i} ^rm_jq_j'=0$ since $x_k$ is
regular on $M$. Applying the same argument by recurrence we get
$q_j=x_k^ps_{j}$ for some $s_{j}\in K[Z_j,x_k]$, and $m_is_{i}=\sum_{j=1 \atop
j\neq i} ^rm_js_{j}.$ We set $v=m_is_{i}.$ Since $\bar{s_i}\neq 0$, we get $\bar{v}\neq 0$ because $\bar{m_i}K[Z_i]$ is a Stanley space. On the other hand $\bar{v}\in
\bar{m_i}K[Z_i]\cap\sum_{j=1 \atop j\neq i} ^r\bar{m_j}K[Z_j]$. It implies that $\bar{v}=0$, a contradiction.

Finally we show that each $m_iK[Z_i,x_k]$ is a
Stanley space. Indeed, suppose that $m_if=0$ for some $f\in K[Z_i,x_k]$ where $f=\sum_{j=0} ^af_jx_k^j$ such that $x_k$ does not divide $f_j$ for all $j$ then $\sum_{j=0} ^am_if_jx_k^j=0$ implies that $\bar{m_i}f_0=0$ in $M/x_kM$. We get $f_0=0$ since $\bar{m_i}K[Z_i]$ is a Stanley space. It follows that $f=x_kg$ where $g=\sum_{j=1} ^af_jx_k^{j-1}$ and from $x_km_ig=m_if=0$ we get $m_ig=0$, $x_k$ being regular on $M$. Then induction on the degree of $f$ concludes the proof since $\deg_{x_k}g<\deg_{x_k}f$.
\end{proof}
\begin{Corollary}
\label{imp}If Stanley's conjecture holds for the module $M/x_iM$, where $x_i\in S$ is regular on $M$, then it also holds for $M$.
\end{Corollary}
\begin{Corollary} \label{old1}
The equality holds in Lemma \ref{old}, if $x_n$ is regular on $S/I$.
\end{Corollary}
The proof follows from Lemma \ref{old} and Proposition \ref{red} for
$M=S/I$.

\begin{Corollary}
\label{main} Let $\depth M=t$. If there exists $u=u_1,\ldots,u_t\in
Mon(S)$ such that $u$ is regular sequence on $M$ then Stanley's
conjecture holds for $M$.
\end{Corollary}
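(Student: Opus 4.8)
The plan is to argue by induction on $t=\depth M$, peeling off one variable at a time and using Corollary \ref{imp} to climb back up. When $t=0$ there is nothing to do, since $\sdepth M\geq 0=\depth M$ holds for every module. So I assume $t\geq 1$ and that a monomial regular sequence $u_1,\dots,u_t$ on $M$ is given.

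The first thing I would record is an elementary observation: if a monomial $u$ is regular on a multigraded module $N$, then every variable dividing $u$ is regular on $N$. Indeed, if $x_i\mid u$ and $x_im=0$ for some $m\neq 0$, then $um=(u/x_i)(x_im)=0$, contradicting the regularity of $u$; consequently every sub-monomial of $u$ is again regular on $N$. Using this, I pick a variable $x:=x_i$ with $x\mid u_1$ and write $u_1=xw$. Then $x$ and $w$ are both regular on $M$, and since $x$ is $M$-regular we have $\depth(M/xM)=t-1$.

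The heart of the argument is to show that $u_2,\dots,u_t$ is again a regular sequence on $\bar M:=M/xM$. Granting this, $\bar M$ carries a monomial regular sequence of length $t-1=\depth\bar M$, so the inductive hypothesis gives Stanley's conjecture for $\bar M$, and Corollary \ref{imp} (applied to the $M$-regular variable $x$) upgrades it to $M$. To handle the sequence I would isolate the local statement: if $xw$ is regular on a module $N$ and $v$ is regular on $N/xwN$, then $v$ is regular on $N/xN$. Here $w$ is regular on $N$ as a sub-monomial of $xw$. To prove it, suppose $vm\in xN$, say $vm=xm_0$; multiplying by $w$ gives $v(wm)=xwm_0\in xwN$, hence $v\,\overline{wm}=0$ in $N/xwN$, so $\overline{wm}=0$, i.e. $wm\in xwN$. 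Writing $wm=xwm_1$ yields $w(m-xm_1)=0$, and since $w$ is regular on $N$ we conclude $m=xm_1\in xN$.

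To pass from this one-step statement to the full tail $u_2,\dots,u_t$, I would apply it with $N=M/(u_2,\dots,u_{j-1})M$ and $v=u_j$, for each $j=2,\dots,t$. The required hypothesis that $u_1=xw$ be regular on $N$ says exactly that $u_1$ may be commuted past $u_2,\dots,u_{j-1}$, which I would supply by permutability of regular sequences of homogeneous elements of the graded maximal ideal; the other hypothesis, that $u_j$ be regular on $N/u_1N=M/(u_1,\dots,u_{j-1})M$, is built into the original regular sequence. I expect the main obstacle to be precisely this bookkeeping step: confirming that, after discarding $u_1$ and reducing modulo $x$, the tail $u_2,\dots,u_t$ remains a regular sequence on $\bar M$, which is where permutability and the local cancellation argument above must be combined. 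Everything else is a clean descent through Corollary \ref{imp}.
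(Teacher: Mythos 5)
Your proof is correct, and at its core it follows the same strategy as the paper: replace the monomials $u_i$ by variables in their supports (using the observation that any variable dividing a monomial that is regular on a module is itself regular, via associated primes or the factorization trick you give), reduce modulo one variable at a time, and climb back up with Proposition \ref{red}/Corollary \ref{imp}. The difference is in how much of the bookkeeping is actually carried out. The paper's proof asserts in one breath that the sequence $u$ "may be chosen" to consist of variables $x_{i_j}\in\supp(u_i)$, justifying only that each such variable is individually a non-zerodivisor on $M$; it does not verify that the resulting sequence of variables is still a \emph{regular sequence}, which is exactly the non-trivial point. Your cancellation lemma --- if $xw$ is regular on $N$ and $v$ is regular on $N/xwN$, then $v$ is regular on $N/xN$ --- combined with permutability of homogeneous regular sequences, supplies precisely this missing verification, and your inductive organization (peel off one variable, check the tail $u_2,\dots,u_t$ remains regular on $M/xM$, invoke Corollary \ref{imp}) makes the descent airtight. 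So your argument buys rigor at the step the paper glosses over, at the cost of a slightly longer induction; the paper's version is shorter but implicitly relies on the very fact you prove.
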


\begin{proof}
For any regular sequence $u=u_1,\ldots,u_t\in Mon(S)$ we may choose $u$ such that $u_i=x_{i_j}$ for all $1\leq i\leq t$,  where $x_{i_j}\in \supp(u_i)$, since for any monomial $u_i\in S$ being regular on $M$ implies that each $x_{i_j}\in \supp(u_i)$ is regular on $M$, because if $x_{i_j}$ belong to the set of zero divisors of $M$ then $x_{i_j}\in P$ for some $P\in \Ass(M)$, so $u_i\in P$, which is not true as $u_i$ is regular on $M$. Since $u$ is a maximal regular sequence on $M$, we have $\depth
M/(u_1,\ldots,u_t)M=0$. Applying Proposition \ref{red} by recurrence
we get $\sdepth M\geq \sdepth M/(u_1,\ldots,u_t)M+t \geq t=\depth M$.
Hence Stanley's conjecture holds for  $M$.
\end{proof}
\begin{Example}{\em
Let $S=K[x,y,z,t]$ and $M=(x,y,z)/(xy)$. Since $\depth M=2$ and
$\{z,t\}$ is a $M$-regular sequence,  we may apply Corollary
\ref{main} to see that Stanley's conjecture holds for $M$.}
\end{Example}

\begin{Theorem}
\label{acleanth}Let $M$ be a finitely  generated multigraded
$S$-module. If $M$ is almost clean and $x_k\in S$ is regular on $M$,
then
\[
\sdepth M/x_kM =\sdepth M - 1.
\]
\end{Theorem}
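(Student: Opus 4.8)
The plan is to combine the two one-sided inequalities that have already been established. The theorem states the equality $\sdepth M/x_kM = \sdepth M - 1$ under the hypotheses that $M$ is almost clean and $x_k$ is regular on $M$. I would observe that these are exactly the hypotheses under which the two preceding results apply, so the proof should reduce to citing them in the correct direction.

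First I would invoke Lemma \ref{aclean}, which gives the inequality $\sdepth M/x_kM \geq \sdepth M - 1$ precisely when $M$ is almost clean and $x_k$ is regular on $M$. Both hypotheses of the present theorem are identical to those of that lemma, so this direction is immediate. Next I would invoke Proposition \ref{red}, which gives the reverse inequality $\sdepth M/x_kM \leq \sdepth M - 1$ for any finitely generated $\ZZ^n$-graded module $M$ with $x_k$ regular on $M$; here the almost clean hypothesis is not even needed, only the regularity of $x_k$, which we are given. Combining the two inequalities yields the desired equality.

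Since the real work was done in establishing Lemma \ref{aclean} and Proposition \ref{red} separately, the only subtlety here is bookkeeping: one must check that the hypotheses line up correctly, namely that the almost clean assumption supplies the lower bound through the explicit prime filtration argument while regularity alone supplies the upper bound through the lifting of a Stanley decomposition of $M/x_kM$ to one of $M$. I do not expect any genuine obstacle; the statement is essentially a corollary and the proof is a two-line sandwich argument. I would therefore write simply that the conclusion follows at once from Lemma \ref{aclean} and Proposition \ref{red}.
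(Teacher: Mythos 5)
Your proposal is correct and is exactly the paper's own argument: the paper also obtains the equality by combining the lower bound from Lemma \ref{aclean} with the upper bound from Proposition \ref{red}. Nothing further is needed.
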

The proof follows from Lemma \ref{aclean} and Proposition \ref{red}.
\begin{Theorem}
\label{aclean3}Let $M$ be a finitely generated multigraded
$S$-module. If $M$ is almost clean and $u\in S$ is a monomial, which
is regular on $M$, then $ \sdepth M/uM \geq \sdepth M - 1.$
%In particular, Stanley conjecture holds for $M$ if and only if it holds for $M/uM$.
\end{Theorem}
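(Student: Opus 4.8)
The plan is to reduce the general monomial case to the single-variable case already settled in Lemma \ref{aclean}, arguing by induction on the total degree $\deg u$. The base case $\deg u = 1$ means $u$ is a single variable, and then the assertion is precisely Lemma \ref{aclean} (using that $M$ is almost clean and $u$ is regular). So I would assume $\deg u \geq 2$ and write $u = x_k v$, where $x_k$ is any variable dividing $u$ and $v$ is a monomial with $\deg v = \deg u - 1 \geq 1$.

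First I would record the two regularity facts that make the induction run. Since $u$ is regular on $M$, the variable $x_k$ is regular on $M$: if $x_k$ lay in some $P\in\Ass M$, then $u=x_kv\in P$, contradicting that $u$ avoids all associated primes. Likewise $v$ is regular on $M$, for if $vm=0$ with $m\neq 0$ then $um=x_k(vm)=0$, again a contradiction. In particular $v$ is a monomial of smaller degree that is regular on the almost clean module $M$, so the induction hypothesis applies to it.

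Next I would build a short exact sequence linking $M/uM$ to $M/x_kM$ and $M/vM$. Since $uM=x_kvM\subseteq vM\subseteq M$, the submodule $vM/uM$ of $M/uM$ has quotient $M/vM$, giving
\[
0\longrightarrow vM/uM\longrightarrow M/uM\longrightarrow M/vM\longrightarrow 0.
\]
Because $v$ is regular on $M$, multiplication by $v$ is an injective multigraded map $M\to M$ with image $vM$; it identifies $M$ with $vM$ up to a degree shift by $\deg v$ and carries $x_kM$ onto $x_kvM=uM$. Hence $vM/uM\cong (M/x_kM)(-\deg v)$, and since Stanley depth is invariant under multigraded shifts we get $\sdepth(vM/uM)=\sdepth(M/x_kM)$.

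Finally I would combine the estimates. Applying Lemma \ref{exact} to the sequence above yields
\[
\sdepth M/uM\geq \min\{\sdepth M/x_kM,\ \sdepth M/vM\}.
\]
By Lemma \ref{aclean} applied to the single variable $x_k$ we have $\sdepth M/x_kM\geq \sdepth M-1$, and by the induction hypothesis $\sdepth M/vM\geq \sdepth M-1$; the desired bound $\sdepth M/uM\geq \sdepth M-1$ follows at once. The step I expect to require the most care is the identification $vM/uM\cong (M/x_kM)(-\deg v)$: it rests essentially on $v$ being regular (so that $M\cong vM$) and on the shift-invariance of $\sdepth$, and it is exactly this identification that lets the single-variable Lemma \ref{aclean} feed the inductive argument. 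Everything else is formal, modulo checking that each variable dividing $u$ is regular, which is the same observation used in the proof of Corollary \ref{main}.
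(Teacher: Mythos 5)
Your proof is correct and follows essentially the same route as the paper: the paper interpolates a chain of submodules between $uM$ and $M$ whose successive quotients are shifted copies of $M/x_{i_k}M$ and applies Corollary \ref{easy} together with Lemma \ref{aclean}, which is exactly what your induction on $\deg u$ via the sequence $0\to vM/uM\to M/uM\to M/vM\to 0$ unfolds into. The only nitpick is that the shift in $vM/uM\cong (M/x_kM)(-a)$ is by the multidegree $a\in\ZZ^n$ of $v$ rather than its total degree, but since $\sdepth$ is invariant under any multigraded shift this does not affect the argument.
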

\begin{proof}
Let $u=x_{i_1} ^{a_1}\ldots x_{i_t} ^{a_{t}}$. Since $u$ is regular
on $M$, it follows that each $x_{i_k}\in \supp(u)$ is regular on
$M$, where we denote by $\supp(u)$ the set of all variables $x_j$
such that $x_j$ divides the monomial $u$. We consider an ascending
chain of submodules of $M$ between $uM$ and $M$ where two successive
members of the chain are of the form
\[
x_{i_1} ^{b_1}\cdots x_{i_k} ^{b_k} \cdots x_{i_t} ^{b_{t}}M\subset
x_{i_1} ^{b_1}\cdots x_{i_k} ^{b_k-1} \cdots x_{i_t} ^{b_{t}}M,
\]
and where $b_i\leq a_i$ for all $i=1,\ldots, t$.

We obtain
\[
x_{i_1} ^{b_1}\cdots x_{i_k} ^{b_k-1} \cdots x_{i_t}
^{b_{t}}M/x_{i_1} ^{b_1}\cdots x_{i_k} ^{b_k} \cdots x_{i_t}
^{b_{t}}M
 \simeq
M/x_{i_k}M,
\]
since each $x_{i_k}\in\supp(u)$ is regular on $M$. Therefore Lemma
\ref{aclean} and Corollary \ref{easy} imply that
\[
\sdepth(M/uM)\geq  \sdepth(M/x_{i_k}M) = \sdepth M-1.
\]
\end{proof}

\section{The behavior of sdepth on short exact sequence of multigraded modules}

The following "Depth Lemma" is well-known.
\begin{Lemma}\cite[Lemma 1.3.9]{V}
\label{depthlemma}If
\[
0\rightarrow U\rightarrow M\rightarrow N\rightarrow 0
\]
is a short exact sequence of modules over a local ring $R$, then
\begin{enumerate}
  \item[(a)] If $\depth M<\depth N$, then $\depth U=\depth M$.
%  \item[(b)] If $\depth M=\depth N$, then $\depth U\geq \depth M$.
  \item[(b)] If $\depth M>\depth N$, then $\depth U=\depth N + 1$.
\end{enumerate}
\end{Lemma}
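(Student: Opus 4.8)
The plan is to reduce everything to the long exact sequence of $\Ext$ modules and to locate the first nonvanishing cohomology of $U$. First I would invoke the standard cohomological description of depth over a Noetherian local ring $(R,\mm)$ with residue field $k=R/\mm$: for any finitely generated $R$-module $X\neq 0$,
\[
\depth X=\min\{i:\Ext^i_R(k,X)\neq 0\}.
\]
Applying $\Hom_R(k,-)$ to $0\to U\to M\to N\to 0$ yields the long exact sequence
\[
\cdots\to\Ext^{i-1}_R(k,N)\to\Ext^i_R(k,U)\to\Ext^i_R(k,M)\to\Ext^i_R(k,N)\to\cdots,
\]
and writing $u=\depth U$, $a=\depth M$, $b=\depth N$, the entire argument becomes a matter of determining the least $i$ with $\Ext^i_R(k,U)\neq 0$ from the known thresholds $a$ and $b$.

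For part (a) I would assume $a<b$. For each $i<a$ the neighbours $\Ext^i_R(k,M)$ and $\Ext^{i-1}_R(k,N)$ both vanish (the latter because $i-1<a<b$), so $\Ext^i_R(k,U)=0$ and $u\geq a$. At $i=a$ the four-term piece
\[
\Ext^{a-1}_R(k,N)\to\Ext^a_R(k,U)\to\Ext^a_R(k,M)\to\Ext^a_R(k,N)
\]
has both outer terms zero, since $a-1<b$ and $a<b$; hence $\Ext^a_R(k,U)\cong\Ext^a_R(k,M)\neq 0$, giving $u=a=\depth M$.

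For part (b) I would assume $a>b$, so that $\Ext^i_R(k,M)=0$ for all $i\leq b$. The same neighbour-vanishing argument now forces $\Ext^i_R(k,U)=0$ for every $i\leq b$, whence $u\geq b+1$. To pin down equality I would look at the segment
\[
\Ext^b_R(k,M)\to\Ext^b_R(k,N)\to\Ext^{b+1}_R(k,U)\to\Ext^{b+1}_R(k,M),
\]
where $\Ext^b_R(k,M)=0$ makes the connecting map $\Ext^b_R(k,N)\to\Ext^{b+1}_R(k,U)$ injective; since $\Ext^b_R(k,N)\neq 0$ this produces $\Ext^{b+1}_R(k,U)\neq 0$, and therefore $u=b+1=\depth N+1$.

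The main obstacle is minimal: the argument is essentially pure bookkeeping with the long exact sequence, and the one step genuinely deserving attention is the connecting-homomorphism argument in part (b), where obtaining the \emph{equality} $u=b+1$ (rather than merely $u\geq b+1$) depends precisely on the injectivity supplied by the vanishing of $\Ext^b_R(k,M)$. I would also keep the degenerate thresholds $a=0$ and $b=0$ in view, but there the convention $\Ext^{-1}_R(k,-)=0$ lets the same computation go through unchanged.
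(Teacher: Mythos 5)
Your proof is correct. The paper does not prove this lemma at all --- it is quoted directly from \cite[Lemma 1.3.9]{V} --- and your argument via the Rees characterization $\depth X=\min\{i:\Ext^i_R(k,X)\neq 0\}$ together with the long exact sequence of $\Ext$ is the standard one (essentially that of the cited source), with the key injectivity step in part (b) handled correctly.
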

We will show that most of the statements of the "Depth Lemma" are wrong if we replace $\depth$ by $\sdepth$. We first observe

\begin{Lemma}
\label{exact}Let
\[
0\rightarrow U\xrightarrow{f} M\xrightarrow{g} N\rightarrow 0
\]
be an exact sequence of finitely generated $\ZZ^n$-graded
$S$-modules. Then \[ \sdepth M\geq \min\{\sdepth U,\sdepth N\}
\]
\end{Lemma}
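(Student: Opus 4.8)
The plan is to build a Stanley decomposition of $M$ by splicing together optimal Stanley decompositions of $U$ and $N$ along the exact sequence, exploiting the fact that a Stanley decomposition is really a decomposition of the underlying $\ZZ^n$-graded $K$-vector space. First I would identify $U$ with its image $f(U)\subseteq M$, so that $g$ induces an isomorphism $M/f(U)\cong N$. Then choose Stanley decompositions realizing the Stanley depths,
$$U=\Dirsum_{i=1}^s u_iK[Z_i], \qquad N=\Dirsum_{j=1}^t \bar{n}_jK[W_j],$$
with $\min_i|Z_i|=\sdepth U$ and $\min_j|W_j|=\sdepth N$. Since $g$ is a surjective graded map, for each $j$ I would lift the homogeneous generator $\bar{n}_j$ to a homogeneous element $n_j\in M$ of the same multidegree with $g(n_j)=\bar{n}_j$. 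The candidate decomposition is then
$$M=\Dirsum_{i=1}^s f(u_i)K[Z_i]\;\dirsum\;\Dirsum_{j=1}^t n_jK[W_j].$$

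The verification splits into three checks. Each $f(u_i)K[Z_i]$ is a Stanley space because $f$ is injective, hence it is $K[Z_i]$-isomorphic to $u_iK[Z_i]$ and so free of dimension $|Z_i|$. For the freeness of the lifted spaces $n_jK[W_j]$ I would push through $g$: the monomials $n_jw$, for $w$ a monomial in $K[W_j]$, are sent to $\bar{n}_jw$, which form part of a $K$-basis of $N$ and are therefore linearly independent; consequently the $n_jw$ are themselves linearly independent, so $n_jK[W_j]$ is free over $K[W_j]$ of dimension $|W_j|$. Finally, to see that the whole family spans $M$ with a direct sum, I would argue degreewise, using that in each multidegree $a$ the sequence $0\to U_a\to M_a\to N_a\to 0$ is exact: given $m\in M$, I first match $g(m)$ by a $K$-combination of the $n_jw$, subtract to land in $\Ker g=f(U)$, and express the remainder in the spaces $f(u_i)K[Z_i]$. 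Directness then follows by applying $g$ to any vanishing relation, which annihilates the $f(U)$-part and forces the $N$-coefficients to vanish by independence in $N$, after which injectivity of $f$ forces the remaining coefficients to vanish as well.

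Granting these checks, the displayed presentation is a Stanley decomposition of $M$ whose Stanley depth equals $\min\{\min_i|Z_i|,\min_j|W_j|\}=\min\{\sdepth U,\sdepth N\}$, which yields $\sdepth M\geq\min\{\sdepth U,\sdepth N\}$. The single point that demands care is the freeness of the lifted spaces $n_jK[W_j]$: an arbitrary homogeneous lift $n_j$ need not obviously produce a free $K[W_j]$-module, and this is where the argument could break down. The resolution is that $g$ restricts to an injection on each $n_jK[W_j]$—inherited directly from the freeness of the chosen Stanley decomposition of $N$—and this injectivity is precisely what guarantees the required linear independence. I expect this freeness verification, rather than the bookkeeping of spanning and directness, to be the crux of the proof.
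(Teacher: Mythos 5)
Your proposal is correct and follows essentially the same route as the paper: identify $U$ with $f(U)$, lift the generators of an optimal Stanley decomposition of $N$ to homogeneous elements of $M$, and verify that the union of the two families of Stanley spaces gives a Stanley decomposition of $M$, with directness obtained by applying $g$ and then using injectivity of $f$. Your version is in fact slightly more explicit than the paper's (notably the degreewise spanning argument and the freeness check for the lifted spaces $n_jK[W_j]$), but the underlying idea is identical.
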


\begin{proof}
Let $\mathcal D:\,\,U=\bigoplus_{i=1} ^ru_iK[Z_i]$ be a Stanley
decomposition of $U$ with $\sdepth(\mathcal D)=\sdepth U$ and let
$\mathcal {D}':\,\,N=\bigoplus_{j=1} ^sn_jK[Z_j ']$ be a Stanley
decomposition of $N$ with $\sdepth(\mathcal D')\\ =\sdepth N$. Since $f$ is injective map, we may suppose that $f$ is an inclusion.
Let $n_j '\in M$ be a $\ZZ^n$ homogeneous element such that $g(n_j
')=n_j$.
Clearly, $M=\sum_{i=1}^ru_iK[Z_i]+\sum_{j=1} ^sn_j 'K[Z_j '].$ We prove that the sum $\sum_{i=1}^ru_iK[Z_i]+\sum_{j=1} ^sn_j
'K[Z_j ']$ is direct. Set $V=\sum_j n_j 'K[Z_j']$. Since the exact sequence splits as linear spaces we see that $U\cap V=\{0\}$. Clearly $\mathcal D$ is already a Stanley decomposition of $U$ and remains to show only that if $y\in n_j 'K[Z_j']\cap \sum_{k=1\atop k\neq j}^sn_k 'K[Z_k ']$ then $y=0$. As $g(y)\in n_jK[Z_j']\cap \sum_{k=1\atop k\neq j}^sn_kK[Z_k ']=\{0\}$, we see that $y\in U$, that is $y\in U\cap V=\{0\}$.
\end{proof}

\begin{Corollary}
\label{easy}Let \[ (0)=M_0\subset M_1\subset \ldots \subset
M_{r-1}\subset M_r=M
\]
be an ascending chain of $\ZZ^n$-graded submodules of $M$. Then
\begin{eqnarray}
\label{easyineq} \sdepth M\geq \min\{\sdepth M_i/M_{i-1}:\; i\in
\{1,\ldots,r\}\}
\end{eqnarray}
for all $i\in[r]$.
\end{Corollary}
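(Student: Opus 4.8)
The plan is to prove Corollary \ref{easy} by induction on the length $r$ of the chain, using Lemma \ref{exact} as the single engine. The base case $r=1$ is trivial, since then $M=M_1=M_1/M_0$ and the inequality reads $\sdepth M\geq \sdepth M$. So assume $r\geq 2$ and that the statement holds for all chains of length $r-1$.

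First I would isolate the top step of the chain, namely the submodule $M_{r-1}\subset M$, and consider the short exact sequence of $\ZZ^n$-graded $S$-modules
\[
0\longrightarrow M_{r-1}\longrightarrow M_r=M\longrightarrow M/M_{r-1}\longrightarrow 0,
\]
where the first map is the inclusion and the second is the canonical surjection. Note that $M/M_{r-1}=M_r/M_{r-1}$ is exactly the last successive quotient appearing in the chain. Applying Lemma \ref{exact} to this sequence gives
\[
\sdepth M\geq \min\{\sdepth M_{r-1},\;\sdepth M_r/M_{r-1}\}.
\]

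Next I would apply the induction hypothesis to the truncated chain $(0)=M_0\subset M_1\subset\cdots\subset M_{r-1}$, which has length $r-1$ and the same successive quotients $M_i/M_{i-1}$ for $i\in\{1,\ldots,r-1\}$. This yields
\[
\sdepth M_{r-1}\geq \min\{\sdepth M_i/M_{i-1}:\; i\in\{1,\ldots,r-1\}\}.
\]
Combining the two displayed inequalities and observing that taking the minimum over $\{1,\ldots,r-1\}$ and then adjoining the extra quotient $M_r/M_{r-1}$ only decreases (weakly) the bound, I obtain
\[
\sdepth M\geq \min\{\sdepth M_i/M_{i-1}:\; i\in\{1,\ldots,r\}\},
\]
which is precisely \eqref{easyineq}, completing the induction.

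I expect essentially no real obstacle here: the result is a routine bootstrapping of Lemma \ref{exact}, and the only point requiring a moment's care is the bookkeeping that the minimum over the full index set $\{1,\ldots,r\}$ is the minimum of the length-$(r-1)$ minimum and the single new term $\sdepth M_r/M_{r-1}$. One should also remark that each quotient $M_i/M_{i-1}$ inherits a natural $\ZZ^n$-grading and finite generation from $M$, so that Lemma \ref{exact} genuinely applies at every step of the induction; this is immediate but worth stating to keep the hypotheses of Lemma \ref{exact} satisfied throughout.
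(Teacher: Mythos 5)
Your proof is correct and is essentially the same as the paper's: both arguments proceed by induction on the length of the chain, applying Lemma \ref{exact} to the short exact sequence $0\to M_{r-1}\to M_r\to M_r/M_{r-1}\to 0$ at each step. The only cosmetic difference is that you phrase the induction as truncating the chain from the top, while the paper runs the induction upward from $M_1$; the content is identical.
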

\begin{proof}
We consider the exact sequence of $\ZZ^n$-graded submodules of $M$
such that
\[
0\rightarrow M_{i-1}\rightarrow M_{i}\rightarrow
M_{i}/M_{i-1}\rightarrow 0.
\]
By Lemma \ref{exact}, we get $\sdepth M_i\geq \min\{\sdepth M_{i-1},
\sdepth M_i/M_{i-1}\}.$ We apply induction to prove the inequality
(\ref{easyineq}). For $i=1$ this holds clearly. We suppose
(\ref{easyineq}) is true for $i=t$ then we have \[ \sdepth M_t\geq
\min\{\sdepth M_i/M_{i-1}:\; i\in \{1,\ldots,t\}\}.
\] Let $i=t+1$ then we have $\sdepth M_{t+1}\geq \min\{\sdepth M_{t},
\sdepth M_{t+1}/M_{t}\}$, which is enough.
\end{proof}

The analogue of Lemma $\ref{depthlemma}(a)$ only holds under an additional assumption.

\begin{Corollary}
\label{clear} In the hypothesis of Lemma \ref{exact} suppose that
$\sdepth M<\sdepth N$. Then $\sdepth M\geq \sdepth U$.
\end{Corollary}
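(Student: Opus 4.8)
The plan is to obtain the inequality as a purely formal consequence of Lemma \ref{exact}, with no additional module-theoretic argument needed. Lemma \ref{exact}, applied to the given short exact sequence $0\rightarrow U\rightarrow M\rightarrow N\rightarrow 0$, already supplies the bound $\sdepth M\geq\min\{\sdepth U,\sdepth N\}$. The only thing left to do is to use the standing hypothesis $\sdepth M<\sdepth N$ to decide which of the two ends realizes this minimum, and thereby pin down $\sdepth U$ as a lower bound for $\sdepth M$.

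I would argue by contradiction. Suppose, to the contrary, that $\sdepth M<\sdepth U$. Combining this with the hypothesis $\sdepth M<\sdepth N$ yields $\sdepth M<\min\{\sdepth U,\sdepth N\}$, which directly contradicts the bound $\sdepth M\geq\min\{\sdepth U,\sdepth N\}$ furnished by Lemma \ref{exact}. Hence $\sdepth M\geq\sdepth U$. One can read the same argument positively: since $\sdepth M<\sdepth N$, the minimum $\min\{\sdepth U,\sdepth N\}$ cannot equal $\sdepth N$ (otherwise Lemma \ref{exact} would force $\sdepth M\geq\sdepth N$), so it must equal $\sdepth U$, and $\sdepth M\geq\sdepth U$ follows at once.

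I do not expect a genuine obstacle here, since the entire content is carried by Lemma \ref{exact} and the corollary is a one-line deduction. The point worth emphasizing instead is what the statement deliberately does \emph{not} claim: unlike Lemma \ref{depthlemma}(a), which yields the equality $\depth U=\depth M$, here only the one-sided estimate $\sdepth M\geq\sdepth U$ is asserted. The reverse inequality is precisely where the Stanley-depth analogue of the Depth Lemma fails in general, as the examples cited in the introduction show, so I would make no attempt to strengthen the conclusion to an equality.
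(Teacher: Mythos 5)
Your proof is correct and is essentially identical to the paper's: both argue by contradiction, noting that $\sdepth M<\sdepth U$ together with the hypothesis $\sdepth M<\sdepth N$ would give $\sdepth M<\min\{\sdepth U,\sdepth N\}$, contradicting Lemma \ref{exact}. No gaps; the extra remarks about the failure of the reverse inequality are accurate but not needed.
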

\begin{proof} If $\sdepth M< \sdepth U$, we get $\sdepth M<\min\{ \sdepth U,\sdepth
N\}$ contradicting Lemma \ref{exact}.
\end{proof}

The analogue of $\ref{depthlemma}(b)$ is wrong.
\begin{Example}
\label{me2} {\em Let $S=K[x,y,z]$, $M=(x,y,z)$. In the exact
sequence $ 0\rightarrow M\rightarrow S\rightarrow K\rightarrow 0$,
we have $\sdepth S=3>\sdepth K=0$ but $\sdepth M=2\neq \sdepth K +
1$.}
\end{Example}
Note that the case treated in Proposition \ref{red}, that is the short exact
sequence $0\rightarrow M\xrightarrow{x_k} M\rightarrow
M/x_kM\rightarrow 0,$ and Lemma \ref{mcii} apparently hints that some analogue of $(b)$
from "Depth Lemma" in the frame of $\sdepth$ might be true.
Unfortunately, this is not the case as shows the following:
\begin{Example}
\label{d2}{\em We have a resolution $0\rightarrow
\Omega^1m\rightarrow S^3\rightarrow m\rightarrow 0$,
 where $S=K[x,y,z]$ and $m=(x,y,z)$. Then $\Omega^1m$ is not free because otherwise $\projdim_S m$ should be $1$,
  which is not true. If $\sdepth \Omega^1m=3$ then follows $\Omega^1m$ free by the elementary Lemma \ref{elem2}.
  Thus $\sdepth \Omega^1m\leq 2=\sdepth m$.}
\end{Example}

However it remains still the problem in general that if for an exact
sequence $0\rightarrow U\rightarrow M\rightarrow N\rightarrow 0$,
$\sdepth M>\sdepth N$ implies $\sdepth U\geq \sdepth N + 1$. In
general this is false (see Example \ref{d2}) but we prove this
result in a special case.
\begin{Lemma}
\label{mcii}If $I\subset S=K[x_1,\ldots,x_n]$ is a monomial complete intersection, then $\sdepth I\\\geq \sdepth S/I + 1.$
\end{Lemma}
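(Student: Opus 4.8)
The plan is to combine two inequalities: the trivial upper bound $\sdepth(S/I)\le\dim(S/I)$ and a lower bound $\sdepth I\ge\dim(S/I)+1$, the latter being the real content. Since $I$ is a monomial complete intersection it is generated by a regular sequence of monomials $u_1,\dots,u_m$, and a sequence of monomials is regular precisely when the $u_j$ have pairwise disjoint supports; in particular $\dim(S/I)=n-m$. The upper bound is immediate because the Stanley depth of a finitely generated $\ZZ^n$-graded module never exceeds its Krull dimension, so $\sdepth(S/I)\le n-m$. It therefore remains to prove $\sdepth I\ge n-m+1$.

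For the lower bound I would filter $I$ by the partial ideals $I_j=(u_1,\dots,u_j)$, giving $0\subset I_1\subset\cdots\subset I_m=I$. Because the supports are disjoint one checks that $u_jS\cap I_{j-1}=u_jI_{j-1}$, whence each successive quotient is
\[
I_j/I_{j-1}\cong u_jS/u_jI_{j-1}\cong \bigl(S/I_{j-1}\bigr)(-\deg u_j),
\]
while $I_1\cong S(-\deg u_1)$ is free. Applying Corollary \ref{easy} to this chain yields
\[
\sdepth I\ \ge\ \min\Bigl\{\,\sdepth S,\ \sdepth\bigl(S/I_{j-1}\bigr):2\le j\le m\,\Bigr\}.
\]
Since $\sdepth S=n$, everything reduces to a single uniform lower bound for the Stanley depth of the complete-intersection quotients $S/I_{j-1}$.

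The key auxiliary claim is thus: for a monomial complete intersection $J=(u_1,\dots,u_k)$ one has $\sdepth(S/J)=\dim(S/J)=n-k$. I would prove the nontrivial inequality $\sdepth(S/J)\ge n-k$ by induction on $k$, the base case $k=0$ being $\sdepth S=n$. For the step, set $J'=(u_1,\dots,u_{k-1})$; then $u_k$ is regular on $S/J'$, since its support misses every prime in $\Ass(S/J')=\Min(S/J')$, and $S/J\cong (S/J')/u_k(S/J')$. A monomial complete intersection is clean, hence almost clean, so Theorem \ref{aclean3} applies to $M=S/J'$ with the regular monomial $u_k$, giving $\sdepth(S/J)\ge\sdepth(S/J')-1\ge(n-k+1)-1=n-k$ by the inductive hypothesis. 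Substituting $\sdepth(S/I_{j-1})=n-(j-1)$ into the displayed minimum (which is attained at $j=m$) produces $\sdepth I\ge n-m+1$, and together with $\sdepth(S/I)\le n-m$ this is exactly $\sdepth I\ge\sdepth(S/I)+1$.

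The main obstacle is the auxiliary claim, and more precisely the input that \emph{every} monomial complete intersection quotient $S/J'$ is almost clean, since this is what licenses Theorem \ref{aclean3} at each stage of the induction. When the generators are squarefree one reads off a prime filtration with $\Supp=\Ass=\Min$ directly, but when the $u_j$ are genuine powers one must argue more carefully that such a filtration still exists, equivalently that $S/J'$ admits a Stanley decomposition all of whose spaces have dimension $\dim(S/J')$. This cleanness statement, rather than the homological bookkeeping surrounding it, is where the real work lies.
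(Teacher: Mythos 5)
Your overall skeleton (bound $\sdepth S/I$ above by $\dim S/I=n-m$, bound $\sdepth I$ below by $n-m+1$) matches the paper's, but the two halves are handled very differently. The paper's proof is essentially two citations: $\sdepth S/I=n-m$ follows by applying \cite[Theorem 1.1]{As} recursively (that theorem gives $\sdepth S/(J,u)=\sdepth S/J-1$ for \emph{any} monomial $u$ regular on $S/J$, with no cleanness hypothesis), and $\sdepth I\geq n-m+1$ is quoted from \cite{HSY} or \cite[Proposition 3.4]{HVZ}. Your replacement of the second citation by the filtration $0\subset(u_1)\subset(u_1,u_2)\subset\cdots\subset I$ together with Corollary \ref{easy} is correct and is a genuinely more self-contained argument: the identification $u_jS\cap I_{j-1}=u_jI_{j-1}$ (from regularity of $u_j$ on $S/I_{j-1}$) does give $I_j/I_{j-1}\cong(S/I_{j-1})(-\deg u_j)$, and the minimum is indeed attained at the last step, yielding $n-m+1$ once the auxiliary claim is in place.

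The genuine gap is exactly where you locate it: your induction for $\sdepth(S/J)\geq n-k$ routes through Theorem \ref{aclean3}, whose hypothesis is that $S/J'$ is almost clean, and you assert cleanness of monomial complete intersections without proof before conceding that for non-squarefree generators ``the real work lies'' there. As written, the argument is therefore incomplete at its load-bearing step. The statement is true, but the paper deliberately avoids having to prove it by invoking \cite[Theorem 1.1]{As}, which applies to $S/J'$ modulo a regular monomial unconditionally; citing that result (or, alternatively, splitting $S/J$ as a tensor product over the pairwise disjoint supports of the $u_i$ and applying Theorem \ref{atp}, which reduces the cleanness question to the principal case $K[\supp u]/(u)$) would close your gap with no new work. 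Absent one of these patches, your proof does not go through as a self-contained argument.
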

\begin{proof}
Let $\{v_1,\ldots,v_m\}$ be the regular sequence of monomials
generating  $I$. Since $\sdepth S/I\\=n-m$, by applying \cite[Theorem
1.1]{As} recursively, and $\sdepth I\geq n-m+1$, by \cite{HSY}, or
\cite[Proposition 3.4]{HVZ}, it follows the desired result.
\end{proof}

In general for any monomial ideal the inequality in above lemma is still an open question. This inequality motivates that $\sdepth I\geq \sdepth J/I+1$ for any two monomial ideals $I\subset J\subset S$. But this inequality does not hold as shows the following example:
\begin{Example}
\label{ref3}Let $S=K[x,y]$, $I=(xy,y^2)$, $J=I+(x^2)$. Then we have $\sdepth J/I=1=\sdepth I=\sdepth J$.
\end{Example}

\begin{Lemma}
\label{elem2}If $M$ is multigraded $S$-module, $S=K[x_1,\ldots,x_n]$ with $\sdepth M=n$ then $M$ is free.
\end{Lemma}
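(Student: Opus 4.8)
The plan is to extract the structural meaning of the hypothesis $\sdepth M = n$ and to observe that in a polynomial ring in $n$ variables a Stanley space can only attain the full dimension $n$ in the most rigid possible way. First I would unwind the definition of Stanley depth: $\sdepth M = n$ guarantees the existence of a Stanley decomposition $\mathcal{D}:\, M = \Dirsum_{i=1}^r m_i K[Z_i]$ with $\sdepth \mathcal{D} = \min_i |Z_i| = n$. Since each $Z_i \subseteq \{x_1,\ldots,x_n\}$ automatically satisfies $|Z_i| \leq n$, the equality $\min_i |Z_i| = n$ forces $|Z_i| = n$, hence $Z_i = \{x_1,\ldots,x_n\}$ and $K[Z_i] = S$, for every $i$. (I would at this stage discard any degenerate summands with $m_i = 0$.)

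Next I would record the consequence for the individual summands. By definition a Stanley space $m_i K[Z_i]$ is a free $K[Z_i]$-module; with $K[Z_i] = S$ this reads that $m_i S$ is a free $S$-module, and being generated over $S$ by the single element $m_i$ it must be free of rank one. Thus the natural surjection $S \to m_i S$ sending $1 \mapsto m_i$ is an isomorphism, so $m_i S \iso S$ as multigraded $S$-modules.

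The one point genuinely requiring an argument—and the step I expect to be the main obstacle—is the upgrade from a decomposition that is a priori only a direct sum of $K$-\emph{vector spaces} to a direct sum of $S$-\emph{modules}. Here I would exploit that, now that $K[Z_i] = S$, each summand $m_i S$ is itself an $S$-submodule of $M$. Consequently the spanning relation $M = \sum_{i=1}^r m_i S$ already holds as $S$-modules, while the directness hypothesis of $\mathcal{D}$ at the vector-space level yields $m_i S \cap \sum_{j \neq i} m_j S = (0)$ for each $i$; this intersection condition is precisely what is needed for the sum to be $S$-module direct. Putting these together gives $M = \Dirsum_{i=1}^r m_i S \iso S^r$, so $M$ is free. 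Everything except this last compatibility check is immediate from the definitions, so the write-up would be short, with the bulk of the care going into justifying that the $K$-linear directness transfers to $S$-linear directness.
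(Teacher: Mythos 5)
Your proof is correct and follows essentially the same route as the paper's: identify that $\sdepth M=n$ forces every $Z_i$ to be the full variable set, so each summand $m_iK[Z_i]=m_iS$ is a rank-one free $S$-submodule, and then note that the $K$-linear directness of the decomposition automatically upgrades to an $S$-module direct sum since the summands are $S$-submodules. You simply spell out carefully the step the paper compresses into ``the direct sum is of linear spaces but it turns out to be of free $S$-modules.''
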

\begin{proof} If $\sdepth\ M=n$, then we have a Stanley
decomposition  of the form $M=\oplus_i u_i S$ and $u_iS$ are free
$S$-modules. The direct sum is of linear spaces but it turns out to
be of free $S$-modules.
\end{proof}

\section{The behavior of sdepth on algebra tensor product}
\begin{Theorem}
 \label{atp}Let $I\subset S_1=K[x_1,\ldots,x_n]$, $J\subset
 S_2=K[y_1,\ldots,y_m]$ be monomial ideals and
 $S=K[x_1,\ldots,x_n,y_1,\ldots,y_m]$. Then $
  \sdepth S_1/I + \sdepth S_2/J \leq \sdepth S/(IS,JS).$
 \end{Theorem}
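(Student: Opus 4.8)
The plan is to exploit the natural identification of multigraded $K$-vector spaces
\[
S/(IS,JS)\iso (S_1/I)\tensor_K(S_2/J),
\]
which holds because $S=S_1\tensor_K S_2$ and the variables $x_1,\ldots,x_n$ are disjoint from $y_1,\ldots,y_m$. Concretely, since $(IS,JS)=IS+JS$ is generated by monomials in the $x$'s and monomials in the $y$'s separately, a monomial $x^ay^b$ of $S$ lies outside $(IS,JS)$ if and only if $x^a\notin I$ and $y^b\notin J$. Thus the monomials of $S$ outside $(IS,JS)$ are exactly the products $vt$, where $v$ is a monomial of $S_1$ outside $I$ and $t$ is a monomial of $S_2$ outside $J$, and the factorization $(v,t)$ is uniquely determined.

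First I would fix Stanley decompositions realizing the two Stanley depths: say $\mathcal{D}_1:\,S_1/I=\Dirsum_{i=1}^r u_iK[Z_i]$ with $\sdepth(\mathcal{D}_1)=\sdepth(S_1/I)$ and $Z_i\subseteq\{x_1,\ldots,x_n\}$, and $\mathcal{D}_2:\,S_2/J=\Dirsum_{j=1}^s w_jK[W_j]$ with $\sdepth(\mathcal{D}_2)=\sdepth(S_2/J)$ and $W_j\subseteq\{y_1,\ldots,y_m\}$. Then I would propose the product decomposition
\[
S/(IS,JS)=\Dirsum_{i=1}^r\Dirsum_{j=1}^s u_iw_jK[Z_i\union W_j]
\]
and claim it is a Stanley decomposition. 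Because $Z_i$ and $W_j$ involve disjoint variable sets, $K[Z_i\union W_j]=K[Z_i]\tensor_K K[W_j]$ and $|Z_i\union W_j|=|Z_i|+|W_j|$.

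The verification splits into two routine checks. For directness and covering: every monomial of $S$ outside $(IS,JS)$ factors uniquely as $vt$ as above, then $v=u_ip$ with $p\in K[Z_i]$ for a unique $i$ (by $\mathcal{D}_1$) and $t=w_jq$ with $q\in K[W_j]$ for a unique $j$ (by $\mathcal{D}_2$), so the monomial equals $u_iw_j(pq)$ with $pq\in K[Z_i\union W_j]$ and the data $(i,j,pq)$ uniquely determined; hence the sum is direct and exhausts the quotient. For freeness: the assignment $pq\mapsto u_iw_jpq=(u_ip)(w_jq)$ sends distinct monomials of $K[Z_i\union W_j]$ to distinct nonzero monomials of the quotient, using that $u_iK[Z_i]$ and $w_jK[W_j]$ are free Stanley spaces; therefore each $u_iw_jK[Z_i\union W_j]$ is a free $K[Z_i\union W_j]$-module, i.e. a Stanley space.

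Finally, the Stanley depth of the product decomposition is
\[
\min_{i,j}\bigl(|Z_i|+|W_j|\bigr)=\min_i|Z_i|+\min_j|W_j|=\sdepth(S_1/I)+\sdepth(S_2/J),
\]
whence $\sdepth(S/(IS,JS))\geq\sdepth(S_1/I)+\sdepth(S_2/J)$. The only point requiring care—and the mild obstacle—is the bookkeeping guaranteeing simultaneously that the product pieces are pairwise disjoint and individually free; both follow cleanly from the disjointness of the two variable sets together with the uniqueness of factorization in $S=S_1\tensor_K S_2$.
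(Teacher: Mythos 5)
Your proposal is correct and follows essentially the same route as the paper: both fix Stanley decompositions of $S_1/I$ and $S_2/J$ realizing the respective Stanley depths and show that the product decomposition $\Dirsum_{i,j}u_iw_jK[Z_i\union W_j]$ is a Stanley decomposition of $S/(IS,JS)$. The only cosmetic difference is that you verify directness and covering via the unique factorization of a monomial of $S$ into its $x$-part and $y$-part, whereas the paper argues through the intersections $u_iK[Z_i,y_1,\ldots,y_m]\cap w_jK[W_j,x_1,\ldots,x_n]$; the substance is the same.
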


\begin{proof}

Let
\[
 \mathcal{D}_1:\; S_1/I=\Dirsum_{i=1}^ru_iK[Z_i]
 \]
be a Stanley decomposition of $S_1/I$ such that $\sdepth
\mathcal{D}_1=\sdepth S_1/I$ and
\[
 \mathcal{D}_2:\; S_2/J=\Dirsum_{j=1}^sv_jK[W_j]
 \]
be a Stanley decomposition of $S_2/J$ such that $\sdepth
\mathcal{D}_2=\sdepth S_2/J$. Then we have

\begin{eqnarray*}
S/IS&=&S_1[y_1,\ldots,y_m]/IS\\&=&(S_1/I)
[y_1,\ldots,y_m]\\&=&\Dirsum_{i=1}^ru_iK[Z_i][y_1,\ldots,y_m]\\
&=&\Dirsum_{i=1}^ru_iK[Z_i,y_1,\ldots,y_m]
\end{eqnarray*}
and
\begin{eqnarray*}
S/JS&=&S_2[x_1,\ldots,x_n]/JS\\&=&(S_2/J)[x_1,\ldots,x_n]\\&=&\Dirsum_{j=1}^sv_jK[W_j][x_1,\ldots,x_n]\\
&=&\Dirsum_{j=1}^sv_jK[W_j,x_1,\ldots,x_n].
\end{eqnarray*}
We claim that
$$S/(IS,JS)=\Dirsum_{i,j}u_iv_jK[Z_i,W_j].$$
Let $w\in(IS,JS)^c=S/(IS,JS)$ be a monomial; that is, $w\in S$ and $w\not\in(IS,JS)$. We have $w\not\in IS$ and
$w\not\in JS$. It follows that $w\in(IS)^c$ and $w\in(JS)^c$. Hence there exist $i$ and $j$ such that
 $w\in u_iK[Z_i,y_1,\ldots,y_m]$ and $w\in v_jK[W_j,x_1,\ldots,x_n]$. So we have
 $w\in u_iK[Z_i,y_1,\ldots,y_m]\cap v_jK[W_j,x_1,\ldots,x_n]$ and $u_iK[Z_i,y_1,\ldots,y_m]\cap v_jK[W_j,x_1,\ldots,x_n]=
 u_iv_jK[Z_i,W_j]$, since $u_i\in S_1$ and $v_j\in S_2$.

In order to prove the opposite inclusion, consider a monomial $v\in
u_iv_jK[Z_i,W_j]$. Then $v\in u_iK[Z_i,y_1,\ldots,y_n]\subset
(IS)^c$ and similarly $v\in (JS)^c$. Thus $v\in (IS,JS)^c$. So $S/(IS,JS)=\sum_{i,j}u_iv_jK[Z_i,W_j]$.

    Now we prove that this sum is direct. Let   $i_1,\,i_2\in [r]$ and $j_1,\,j_2\in [s]$ be such
    that $(i_1,j_1)\neq(i_2,j_2)$, let us say $i_1\neq i_2$. Then $u_{i_1}v_{j_1}K[Z_{i_1},W_{j_1}]\cap
u_{i_2}v_{j_2}K[Z_{i_2},W_{j_2}] \subset
u_{i_1}K[Z_{i_1},y_1,\ldots,y_m]\cap
u_{i_2}K[Z_{i_2},y_1,\ldots,y_m]=\{0\}$, which shows our claim. It
follows that
$\sdepth S_1/I+ \sdepth S_2/J\leq \sdepth S/(IS,JS).$
\end{proof}

The following example shows that the inequality in the above theorem can be strict.

\begin{Example}
\label{atpexp}{\em Let $S=K[x_1,x_2,x_3,x_4,x_5,x_6,x_7,x_8]$ be a polynomial ring over the field $K$. Let $I=(x_1x_3,x_1x_4,x_2x_3,x_2x_4)\subset S_1=K[x_1,x_2,x_3,x_4]$ be the ideal of the polynomial ring $S_1$ and $J=(x_5x_7,x_5x_8,x_6x_7,x_6x_8)\subset S_2=K[x_5,x_6,x_7,x_8]$ be the ideal of the polynomial ring $S_2$. Consider the ideal $(IS,JS)\subset S$, then a Stanley decomposition $\mathcal D$ of $S/(IS,JS)$ is

$
\mathcal D:\,\,S/(IS,JS)=K[x_1,x_2,x_5]\oplus x_3K[x_3,x_5,x_6]\oplus x_4K[x_4,x_5,x_6]
\oplus x_6K[x_1,x_2,x_6]\oplus x_7K[x_1,x_2,x_7]\oplus x_8K[x_1,x_2,x_8]\oplus x_3x_4K[x_3,x_4,x_5]\oplus x_3x_7K[x_3,x_7,x_8]\oplus x_3x_8K[x_3,x_4,x_8]\oplus x_4x_7K[x_3,x_4,x_7]\oplus x_4x_8K[x_4,x_7,x_8]\oplus
x_5x_6K[x_1,x_5,x_6]\oplus
x_7x_8K[x_1,x_7,x_8]\\
\oplus x_2x_5x_6K[x_1,x_2,x_5,x_6]\oplus x_3x_4x_6K[x_3,x_4,x_5,x_6]\oplus x_2x_7x_8K[x_1,x_2,x_7,x_8]\\
\oplus x_3x_4x_7x_8K[x_3,x_4,x_7,x_8],
$
hence $\sdepth \mathcal {D}=3$. Note that $\sdepth S_1/I=1$ with a Stanley decomposition $S_1/I=K[x_1,x_2]\oplus x_3K[x_3]\oplus x_4K[x_3,x_4]$. We observe that $\sdepth S_1/I$ can not be greater than one. Similarly we have $\sdepth S_2/J=1$. Hence we obtain that $\sdepth S_1/I + \sdepth S_2/J<\sdepth S/(IS,JS)$.}
\end{Example}
The following corollary is a particular case of \cite[Theorem 1.1]{As}.
\begin{Corollary}
Let $I\subset S$ be a monomial ideal and $u\in S$ is a monomial, which is regular on $S/I$. Then $\sdepth S/(I,u)\geq \sdepth S/I - 1$.
\end{Corollary}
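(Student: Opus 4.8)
The plan is to reduce the general monomial $u$ to the single-variable case, where Lemma \ref{old} applies directly to $S/I$. Set $M=S/I$ and note that $S/(I,u)=M/uM$, since $u(S/I)=(I,u)/I$. Writing $u=x_{i_1}^{a_1}\cdots x_{i_t}^{a_t}$, I would first observe that every variable $x_{i_k}\in\supp(u)$ is itself regular on $M$: if some $x_{i_k}$ were a zero-divisor it would lie in an associated prime $P\in\Ass(M)$, whence $u\in P$, contradicting the regularity of $u$. This is exactly the argument already used in Corollary \ref{main} and Theorem \ref{aclean3}.

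Next I would mimic the chain construction in the proof of Theorem \ref{aclean3}, carried out inside $\bar M=M/uM$. Consider the ascending chain of submodules of $M$ lying between $uM$ and $M$ whose successive steps have the form
\[
x_{i_1}^{b_1}\cdots x_{i_k}^{b_k}\cdots x_{i_t}^{b_t}M\subset x_{i_1}^{b_1}\cdots x_{i_k}^{b_k-1}\cdots x_{i_t}^{b_t}M ,\qquad b_i\le a_i .
\]
Because each $x_{i_k}$ is regular on $M$, multiplication by the monomial prefix is injective, so each successive quotient is isomorphic (up to a multigraded shift, which does not affect $\sdepth$) to $M/x_{i_k}M=S/(I,x_{i_k})$. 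Reading this as a filtration $0=uM/uM\subset\cdots\subset M/uM=\bar M$ with quotients $\cong S/(I,x_{i_k})$ puts us in position to apply Corollary \ref{easy}.

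The key input is then Lemma \ref{old}, which for the module $S/I$ gives $\sdepth S/(I,x_{i_k})\ge\sdepth S/I-1$ for every single variable $x_{i_k}$, \emph{without} any regularity or almost-clean hypothesis. Combining this with Corollary \ref{easy} yields
\[
\sdepth S/(I,u)=\sdepth\bar M\ \ge\ \min_k\,\sdepth S/(I,x_{i_k})\ \ge\ \sdepth S/I-1,
\]
as required.

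The only delicate point — and the reason this is not merely a citation of Theorem \ref{aclean3} — is that $S/I$ need not be almost clean, so Lemma \ref{aclean} (the single-variable base case invoked in Theorem \ref{aclean3}) is unavailable here. The observation that makes the argument go through is that for the special module $M=S/I$ the unconditional Lemma \ref{old} furnishes precisely the single-variable bound we need, so the almost-clean hypothesis can simply be dropped. Verifying that the successive quotients are genuinely $\cong S/(I,x_{i_k})$, i.e.\ that the prefix multiplication is injective, is where the regularity of $u$ is actually consumed; everything else is routine bookkeeping on the filtration.
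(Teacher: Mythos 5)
Your proof is correct, but it follows a genuinely different route from the paper's. The paper deduces the corollary from Theorem \ref{atp}: since $u$ is regular on $S/I$, no variable of $\supp(u)$ can divide a minimal generator of $I$ (any such variable would be a zero--divisor), so after renumbering one may assume $I$ is generated in $S_1=K[x_1,\ldots,x_r]$ and $u\in S_2=K[x_{r+1},\ldots,x_n]$; then $\sdepth S/(I,u)\geq \sdepth S_1/J+\sdepth S_2/(u)$ by the tensor--product theorem, and the claim follows from $\sdepth S_2/(u)=n-r-1$ together with $\sdepth S/I=\sdepth S_1/J+n-r$ from \cite[Lemma 1.2]{As}. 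You instead recycle the filtration of $M/uM$ from the proof of Theorem \ref{aclean3}, whose factors are shifted copies of $M/x_{i_k}M=S/(I,x_{i_k})$ because each $x_{i_k}\in\supp(u)$ is regular on $M=S/I$, and then replace the almost--clean input (Lemma \ref{aclean}) by the unconditional single--variable bound of Lemma \ref{old}, which is available precisely because $M$ is of the form $S/I$; Corollary \ref{easy} then finishes the argument. Both proofs are sound and consume the regularity of $u$ in different places: the paper uses it to separate variables globally, while you use it only to make the prefix multiplications injective. Your argument has the merit of staying entirely within the machinery of Sections 1 and 2 and of making explicit why the almost--clean hypothesis of Theorem \ref{aclean3} can be dropped for cyclic modules $S/I$; the paper's proof is shorter once Theorem \ref{atp} is in hand and serves to illustrate that theorem, which is presumably why it is placed where it is.
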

\begin{proof}
Renumbering $x_i\in \supp(u_j)$ for all $u_j\in G(I)$, we may suppose that $I$ is generated by a monomial ideal $J\subset S_1=K[x_1,\ldots,x_r]$ and $u\in S_2=K[x_{r+1},\ldots,x_n]$ for some $1<r<n$. Then $\sdepth S/(I,u)\geq \sdepth S_1/J + \sdepth S_2/(u)$, by Theorem \ref{atp}. Since $\sdepth S_2/(u)=n-r-1$ and $\sdepth S/I=\sdepth S_1/J+n-r$, by \cite[Lemma 1.2]{As}, it follows that $\sdepth S/(I,u)\geq \sdepth S/I - 1$.
\end{proof}

 In the analogue of Theorem \ref{atp} for depth we have equality, that is \\$\depth S/(IS,JS)=\depth S_1/I+\depth S_2/J$ (see \cite[Theorem 2.2.21]{V}). We note that if Stanley's conjecture hold for the modules $S_1/I$ and $S_2/J$ it holds also for $S/(IS,JS)$.

\end{document}